\newcommand{\Mod}[1]{\ (\mathrm{mod}\ #1)}
\pgfplotsset{compat=1.18}
\definecolor{uuuuuu}{rgb}{0.27,0.27,0.27}
\definecolor{sqsqsq}{rgb}{0.1255,0.1255,0.1255}
\newtheorem{definition}{Definition} [section]
\newtheorem{theorem}[definition]{Theorem}
\newtheorem{claim}[definition]{Claim}
\newtheorem{problem}[definition]{Problem}
\newtheorem{fact}[definition]{Fact}
\begin{document}
\title{\bf\Large A generalized Tur\'{a}n extension of the Deza--Erd\H{o}s--Frankl Theorem}
\date{\today}
\author{Charlotte Helliar\thanks{Email: \texttt{charlotte.helliar@warwick.ac.uk}} }
\author{Xizhi Liu\thanks{Research was supported by ERC Advanced Grant 101020255. Email: \texttt{xizhi.liu.ac@gmail.com}}}
\affil{Mathematics Institute and DIMAP,
            University of Warwick,
            Coventry, CV4 7AL, UK}
\maketitle
\begin{abstract}
For an integer $r \ge 3$ and a subset $L \subset [0,r-1]$, a graph $G$ is $(K_{r}, L)$-intersecting if the number of vertices in the intersection of every pair of $K_r$ in $G$ belongs to $L$. 
We study the maximum number of $K_r$ in an $n$-vertex $(K_{r}, L)$-intersecting graphs. 
The celebrated Ruzsa--Szemer\'{e}di Theorem corresponds to the case $r=3$ and $L = \{0,1\}$. 

For general $L$ with $2 \le |L| \le r-1$, we establish the upper bound $\left(1-\frac{1}{3r}\right) \prod_{\ell \in L}\frac{n-\ell}{r- \ell}$ for large $n$, which improves the bound provided by the celebrated Deza--Erd\H{o}s--Frankl Theorem by a factor of $1-\frac{1}{3r}$. 
In the special case where $L = \{t, t+1, \ldots, r-1\}$, we derive the tight upper bound for large $n$ and establish a corresponding stability result. This is an extension of the seminal Erd{\H o}s--Ko--Rado Theorem on $t$-intersecting systems to the generalized Tur\'{a}n setting.

Our proof for the Deza--Erd\H{o}s--Frankl part involves an interesting combination of the $\Delta$-system method and Tur\'{a}n's theorem. Meanwhile, for the Erd{\H o}s--Ko--Rado part, we employ the stability method, which relies on a theorem of Frankl regarding $t$-intersecting systems.




\end{abstract}
\section{Introduction}
The seminal Tur\'{a}n Theorem~\cite{TU41} stands as a cornerstone in Extremal Graph Theory. It asserts that for $n \ge \ell \ge 2$, the maximum number of edges in an $n$-vertex $K_{\ell+1}$-free graph is achieved by the Tur\'{a}n graph $T(n,\ell)$. Here, $T(n,\ell)$ represents the balanced complete $\ell$-partite graph on $n$ vertices.
Erd\H{o}s~\cite{Erdos62} considered an extension of the Tur\'{a}n Theorem and proved that for $r \le \ell$, the maximum number of $K_{r}$ subgraphs in an $n$-vertex $K_{\ell+1}$-free graph is also attained by $T(n,\ell)$. 
In general, given a graph $Q$ and a family $\mathcal{F}$ of graphs, let $\mathrm{ex}(n,Q, \mathcal{F})$ denote the maximum number of copies of $Q$ in an $n$-vertex $\mathcal{F}$-free graph.
Alon--Shikhelman~\cite{AS16} initiated a systematic study on $\mathrm{ex}(n,Q, \mathcal{F})$ and obtained numerous results for various combinations of $(Q, \mathcal{F})$. Since then, this topic has been extensively studied in the recent decade.

We consider an interesting case of the generalized Tur\'{a}n problem motivated by theorems from Extremal Set Theory. 
Recall that an $r$-graph is a collection of $r$-subsets of some finite set $V$. 
We identify an $r$-graph with its edge set. 
Given a subset $L \subset [0,r-1]$, an $r$-graph $\mathcal{H}$ is \textbf{$L$-intersecting} if $|e\cap e'| \in L$ for every distinct pair of edges $e, e' \in \mathcal{H}$. 
If $L = \{t, t+1, \ldots, r-1\}$ for some $t \in [r]$, then we simply say $\mathcal{H}$ is \textbf{$t$-intersecting}. 
Determining the maximum size of an $L$-intersecting $r$-graph on $n$ vertices is a central topic in Extremal Set Theory and is connected to many classical results. Below, we highlight two related theorems and refer interested readers to the survey~\cite{FT16} for more insights into this area.

The seminal Erd\H{o}s--Ko--Rado Theorem established a tight upper bound for the size of $t$-intersecting $r$-graphs with a large number of vertices. 

\begin{theorem}[Erd\H{o}s--Ko--Rado~\cite{EKR61}]\label{THM:EKR61}
    Let $r > t \ge 1$ be integers. 
    There exists $n_{0}(r,t)$ such that every $t$-intersecting $r$-graph on $n\ge n_{0}(r,t)$ vertices has size at most $\binom{n-t}{r-t}$. 
\end{theorem}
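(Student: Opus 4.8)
The plan is to prove the Erd\H{o}s--Ko--Rado Theorem (Theorem~\ref{THM:EKR61}) via the classical kernel/shifting argument, which is the standard route to the ``large $n$'' version. Throughout fix $r > t \ge 1$, let $\mathcal{H}$ be a $t$-intersecting $r$-graph on $n$ vertices, and write $m = |\mathcal{H}|$; we want $m \le \binom{n-t}{r-t}$ once $n \ge n_0(r,t)$.

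\textbf{Step 1: Reduce to shifted families.} First I would introduce the shift (compression) operator $S_{ij}$ for $1 \le i < j \le n$, which replaces each edge $e$ containing $j$ but not $i$ by $(e \setminus \{j\}) \cup \{i\}$ whenever the result is not already in $\mathcal{H}$. A standard lemma states that shifting preserves the number of edges and preserves the $t$-intersecting property, and that after finitely many shifts one reaches a \emph{shifted} family $\mathcal{H}^{*}$ (stable under every $S_{ij}$) with $|\mathcal{H}^{*}| = |\mathcal{H}|$. So it suffices to bound $|\mathcal{H}|$ when $\mathcal{H}$ is shifted and $t$-intersecting.

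\textbf{Step 2: Analyze a shifted $t$-intersecting family.} The key structural claim is: if $\mathcal{H}$ is shifted and $t$-intersecting, then in fact $|e \cap e'| \ge t$ already holds ``within a bounded window'', and more precisely every edge $e \in \mathcal{H}$ satisfies $|e \cap [t + 2(r-t)]| \ge t+1$, or some similar localization — the cleanest version being Frankl's observation that a shifted $t$-intersecting family is actually $(t+2i)$-intersecting on $[t+2i, \dots]$ in a suitable sense. Concretely I would argue that if $\mathcal{H}$ is shifted and $t$-intersecting with an edge $e$ disjoint from $[t]$ on many coordinates, repeated shifting produces two edges intersecting in fewer than $t$ vertices, a contradiction. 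This forces every edge to meet $[t]$ in a large fraction, and a counting over the few ``types'' of edges (by their trace on an initial segment of bounded size) gives $m \le \binom{n-t}{r-t} + O(n^{r-t-1})$.

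\textbf{Step 3: Remove the lower-order term for large $n$.} The bound from Step 2 is off by lower-order terms, so the final step sharpens it. Here I would use the fact that the extremal configuration is the ``star'' $\mathcal{S} = \{e : [t] \subseteq e\}$ of size exactly $\binom{n-t}{r-t}$, and show that any shifted $t$-intersecting $\mathcal{H}$ that is \emph{not} contained in such a star must lose at least $cn^{r-t-1}$ edges relative to $\binom{n-t}{r-t}$ — this is where largeness of $n$ enters, since for small $n$ other configurations (e.g.\ all $r$-sets meeting $[t+2]$ in $\ge t+1$ points when $r = t+1$) can beat the star. A clean way: partition $\mathcal{H}$ according to whether $[t] \subseteq e$; the edges with $[t] \not\subseteq e$ have bounded trace on $[t + 2(r-t)]$ by Step 2, hence number $O(n^{r-t-1})$, while each such edge, being $t$-intersecting with everything, \emph{excludes} $\Omega(n^{r-t-1})$ potential star-edges — making the total at most $\binom{n-t}{r-t}$ for $n$ large.

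\textbf{Main obstacle.} The delicate point is Step 2: correctly formulating and proving the localization property of shifted $t$-intersecting families (that every edge meets a bounded initial segment in more than $t$ vertices, with the right bound on the segment length). Getting the window size right — it should be roughly $t + 2(r-t)$ — and the induction on the ``excess'' intersection are the technical heart; everything after that is routine polynomial-in-$n$ counting. An alternative to Steps 2--3 altogether is to invoke Frankl's theorem on $t$-intersecting systems directly (the same result the paper says it uses for the Erd\H{o}s--Ko--Rado part of its own argument), which determines the maximum exactly for all $n$ and immediately gives Theorem~\ref{THM:EKR61} as the large-$n$ special case; I would mention this as the slick route but present the self-contained shifting proof as the main line.
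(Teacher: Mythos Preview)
The paper does not prove Theorem~\ref{THM:EKR61} at all: it is quoted, with citation to~\cite{EKR61}, purely as background in the introduction, and the paper's own arguments never return to establish it. (What the paper \emph{does} prove are Theorems~\ref{THM:EKR-Turan}, \ref{THM:EKR-Turan-Stability}, and~\ref{THM:DEF-Turan}, and for those it uses the Deza--Erd\H{o}s--Frankl stability result, Theorem~\ref{THM:DEF-stability}, to locate a common $t$-core --- not EKR itself.) So there is no ``paper's own proof'' against which to compare your proposal.

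That said, your sketch is a recognisable outline of the standard shifting route to the large-$n$ EKR theorem and would work once tightened, but Step~2 is genuinely imprecise as written. The localisation claim ``every edge meets $[t+2(r-t)]=[2r-t]$ in at least $t+1$ points'' is not the correct statement (take the star itself: the edge $[t]\cup\{2r-t+1,\ldots,3r-2t\}$ meets $[2r-t]$ in exactly $t$ points). What you actually need is either (i) that any two edges of a shifted $t$-intersecting family already $t$-intersect inside $[2r-t]$, or (ii) Frankl's structural lemma that a shifted $t$-intersecting family lies in some $\mathcal{A}_i=\{e:|e\cap[t+2i]|\ge t+i\}$, after which comparing $|\mathcal{A}_i|$ for $i\ge 1$ against $|\mathcal{A}_0|=\binom{n-t}{r-t}$ for large $n$ finishes immediately. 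Your Step~3 exclusion argument (non-star edges each kill $\Omega(n^{r-t-1})$ star edges) is fine in spirit but also needs the correct Step~2 input to get off the ground. The ``slick route'' you mention at the end --- quoting Frankl's theorem --- is in fact the cleanest fix.
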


The celebrated Deza--Erd\H{o}s--Frankl Theorem established a general upper bound for the size of $L$-intersecting $r$-graphs, though finding matching lower bound constructions in general is an extremely challenging problem.

\begin{theorem}[Deza--Erd\H{o}s--Frankl~\cite{DEF78}]\label{THM:DEF78}
    Suppose that $r \ge 3$ and $n \ge 2^r r^3$ are integers and $L \subset [0,r-1]$ is a subset.
    Then every $n$-vertex $L$-intersecting $r$-graph has size at most $\prod_{\ell\in L}\frac{n-\ell}{r-\ell}$. 
\end{theorem}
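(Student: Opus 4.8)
The plan is to prove the bound by induction on $r+|L|$ using the $\Delta$-system method, with the base case $|L|=1$ supplied by a theorem of Deza: a family in which all pairwise intersections share a common size $\ell$ is either a sunflower, and so has size at most $\lfloor(n-\ell)/(r-\ell)\rfloor$, or has size bounded by a function of $r$ alone, which is at most $\frac{n-\ell}{r-\ell}$ once $n\ge 2^{r}r^{3}$; and if $\ell=0$ the claim is just the trivial bound $n/r$ for a family of pairwise disjoint $r$-sets.

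For the inductive step, first suppose $0\in L$. For each vertex $v$ the family $\mathcal{H}_{v}:=\{e\in\mathcal{H}:v\in e\}$ is $(L\setminus\{0\})$-intersecting, since two of its members already share $v$; so by induction $|\mathcal{H}_{v}|\le\prod_{\ell\in L\setminus\{0\}}\frac{n-\ell}{r-\ell}$, and summing $r|\mathcal{H}|=\sum_{v}|\mathcal{H}_{v}|$ over the $n$ vertices yields $|\mathcal{H}|\le\frac{n}{r}\prod_{\ell\in L\setminus\{0\}}\frac{n-\ell}{r-\ell}=\prod_{\ell\in L}\frac{n-\ell}{r-\ell}$, as wanted.

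So assume $0\notin L$, write $\ell_{1}=\min L\ge 1$, and suppose toward a contradiction that $|\mathcal{H}|>\prod_{\ell\in L}\frac{n-\ell}{r-\ell}$; by $n\ge 2^{r}r^{3}$ this product, and hence $|\mathcal{H}|$, is large. The $\Delta$-system method then provides a sunflower $\mathcal{S}\subseteq\mathcal{H}$ with more than $r$ petals, and among all such I would take one whose core $Y$ has the smallest possible size $j:=|Y|$; note $j\in L$ and $1\le j\le r-1$. Since the petals are pairwise disjoint and there are more than $r$ of them, every $e\in\mathcal{H}$ is disjoint from some petal, so $|e\cap Y|\in L$, and trivially $|e\cap Y|\le j$. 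Now partition $\mathcal{H}$ according to the trace $A:=e\cap Y$: for each $A\subseteq Y$ of size $i\in L\cap[0,j]$ the derived family $\mathcal{D}_{A}:=\{e\setminus A:e\in\mathcal{H},\ e\cap Y=A\}$ is an $(r-i)$-graph on the $n-j$ vertices outside $Y$, and it is $(L-i)$-intersecting with $L-i:=\{m-i:m\in L,\ m\ge i\}$, which contains $0$ since $i\in L$. By induction (the total $r+|L|$ having strictly decreased) we get $|\mathcal{D}_{A}|\le\prod_{m\in L,\ m\ge i}\frac{n-j-m+i}{r-m}$. Bounding each factor by $\frac{n-m}{r-m}$ (valid since $i\le j$) and summing over all traces gives $|\mathcal{H}|\le\left(\prod_{\ell\in L}\frac{n-\ell}{r-\ell}\right)\sum_{i\in L\cap[0,j]}\binom{j}{i}\left(\prod_{m\in L,\ m<i}\frac{n-m}{r-m}\right)^{-1}$. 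When $j=\ell_{1}$ this sum collapses to the single term $i=\ell_{1}$, equal to $\binom{\ell_{1}}{\ell_{1}}/1=1$, so $|\mathcal{H}|\le\prod_{\ell\in L}\frac{n-\ell}{r-\ell}$, a contradiction.

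The hard part is the remaining case $j>\ell_{1}$. In the displayed sum every term with $i>\ell_{1}$ is negligible, since its denominator contains $\frac{n-\ell_{1}}{r-\ell_{1}}\ge\frac{n-r}{r}$, which by $n\ge 2^{r}r^{3}$ overwhelms $\binom{j}{i}$; but the term $i=\ell_{1}$ only gives $|\mathcal{H}|\lesssim\binom{j}{\ell_{1}}\prod_{\ell\in L}\frac{n-\ell}{r-\ell}$, leaving the unwanted factor $\binom{j}{\ell_{1}}\ge 2$. Removing it is where the minimality of $|Y|$ must be exploited: if some $\ell_{1}$-trace $A$ had a large $\mathcal{D}_{A}$, a second application of the $\Delta$-system method inside $\mathcal{D}_{A}$ would lift to a sunflower of $\mathcal{H}$ with core $A\cup Z$ for a nonempty $Z$ disjoint from $Y$, and iterating this enlargement of the core should force the configuration to collapse onto a genuine common $\ell_{1}$-set, i.e.\ onto the already-solved case $j=\ell_{1}$. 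Carrying out this reduction rigorously --- together with pinning down how many petals the $\Delta$-system method can be made to deliver at each stage and controlling the accumulated lower-order error, which is exactly what the threshold $n\ge 2^{r}r^{3}$ is calibrated to swallow --- is the crux; everything else is routine bookkeeping inside the induction.
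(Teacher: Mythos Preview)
First, a point of orientation: the paper does not actually prove this theorem. It is quoted from~\cite{DEF78} as a classical result and used as a black box, so there is no ``paper's own proof'' to compare your attempt against. That said, the paper's inductive proof of the related Theorem~\ref{THM:DEF-Turan} shows the standard shape of the argument, and the decisive structural input there is Theorem~\ref{THM:DEF-stability}: once $|\mathcal{H}|$ exceeds $2^{s-1}r^{2}n^{s-1}$ (which follows from $|\mathcal{H}|>\prod_{\ell\in L}\frac{n-\ell}{r-\ell}$ when $n\ge 2^{r}r^{3}$), every edge contains a common $\ell_{1}$-set $T$, and one reduces immediately to the already-handled $0\in L$ case by passing to $\{e\setminus T:e\in\mathcal{H}\}$.

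Your base case $|L|=1$ and your $0\in L$ step are correct. The genuine gap is precisely the case you flag as ``the crux'' and do not carry out: $0\notin L$ with $j>\ell_{1}$. Your suggested repair --- find a large sunflower inside some $\mathcal{D}_{A}$ with $|A|=\ell_{1}$, lift it to a sunflower in $\mathcal{H}$ with core $A\cup Z$, and invoke minimality of $j$ --- does not close the argument as written. The family $\mathcal{D}_{A}$ is $(L-\ell_{1})$-intersecting with $0\in L-\ell_{1}$, so a sunflower extracted from it may perfectly well have $|Z|\ge j-\ell_{1}$; then $|A\cup Z|\ge j$ and minimality of $j$ says nothing. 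Conversely, if you could force $Z=\emptyset$ you would get a sunflower in $\mathcal{H}$ with core $A$ of size $\ell_{1}<j$, an immediate contradiction --- but forcing $Z=\emptyset$ means producing more than $r$ pairwise disjoint members of $\mathcal{D}_{A}$, and nothing in your setup guarantees that. The point is that ``minimal-core sunflower'' is not the right invariant; what one actually needs is the existence of a common $\ell_{1}$-set in all of $\mathcal{H}$, which is exactly Theorem~\ref{THM:DEF-stability} and is itself a nontrivial theorem of~\cite{DEF78}. Once that is in hand, the induction is routine; without it, your $j>\ell_{1}$ case remains open.
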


Given a graph $G$ and an integer $r\ge 3$, we define the associated $r$-graph $\mathcal{K}_{G}^{r}$ as follows:
\begin{align*}
    \mathcal{K}_{G}^{r}
    \coloneqq \left\{S \in \binom{V(G)}{r} \colon G[S] \cong K_{r}\right\}. 
\end{align*}
For a subset $L \subset [0,r-1]$, we say a graph $G$ is \textbf{$(K_{r}, L)$-intersecting} if the $r$-graph $\mathcal{K}_{G}^{r}$ is $L$-intersecting. 
In other words, the number of vertices in the intersection of every pair of $K_r$ in $G$ belongs to $L$.
In the special case where $L = \{t, t+1, \ldots, r-1\}$, we say $G$ is \textbf{$(K_{r}, t)$-intersecting}. 
For convenience, we use $N(K_r, G)$ to denote the number of copies of $K_{r}$ in $G$, and define  
\begin{align*}
    \Psi_{r}(n,L)
    \coloneqq \max\left\{N(K_r, G) \colon \text{$G$ is an $n$-vertex $(K_{r}, L)$-intersecting graph}\right\}. 
\end{align*}
%
There are several sporadic results on $\Psi_{r}(n,L)$ that are known. 
The celebrated Ruzsa--Szemer\'{e}di Theorem~\cite{RS78} is equivalent to the statement that $\Psi_{3}\left(n,\{0,1\}\right) = n^{2-o(1)}$, and this was extended in a recent work by Gowers--Janzer~\cite{GJ20}, who proved that $\Psi_{r}\left(n,\{0,1, \ldots, s-1\}\right) = n^{s-o(1)}$ for all $r > s \ge 1$.
An old result of Erd\H{o}s--S\'{o}s~\cite{Sos76} determines the exact value of $\Psi_{3}\left(n,\{0,2\}\right)$.
Liu--Wang~\cite{LW21} extended their result and determined the exact value of $\Psi_{r}\left(n,L\right)$ for large $n$ in cases where $L = [0,r-1]\setminus \{1\}$ and $L = [r-1]$.
They also obtained some bounds for the case where $L = [0,r-1] \setminus \{s\}$ when $2 \leq s \leq r-1$.

In this note, we aim to establish a general upper bound for $\Psi_{r}(n,L)$ and determine its exact value for the case $L = \{t, t+1, \ldots, r-1\}$, extending the Deza--Erd\H{o}s--Frankl Theorem and the Erd\H{o}s--Ko--Rado Theorem, respectively. We hope our results could serve as a starting point for a more systematic extension of theorems from Extremal Set Theory, an extremally rich theory that has been studied for decades, to generalized Tur\'{a}n problems. A more detailed discussion is included in the last section.

For integers $n \ge r > t \ge 0$, let $S(n,r,t)$ denote the graph obtained from the vertex-disjoint union of $K_{t}$ and $T(n-t,r-t)$ by adding all pairs that have a nonempty intersection with both graphs. 
It is easy to see that $S(n,r,t)$ is $(K_r, t)$-intersecting and 
\begin{align*}
    N\left(K_r, S(n,r,t)\right)
    = N\left(K_{r-t}, T(n-t,r-t)\right)
    =(1+o(1)) \left(\frac{n-t}{r-t}\right)^{r-t}. 
\end{align*}
In the following theorem, we determine the value of $\Psi(n,\{t, t+1, \ldots, r-1\})$ for large $n$. 

\begin{theorem}\label{THM:EKR-Turan}
    Let $r > t \ge 1$ be integers. 
    There exists $n_{0}(r,t)$ such that every $(K_{r},t)$-intersecting graph on $n \ge n_{0}(r,t)$ vertices satisfies 
    \begin{align*}
        N(K_{r},G) \le N\left(K_r, S(n,r,t)\right),  
    \end{align*}
    and equality holds iff $G \cong S(n,r,t)$. 
\end{theorem}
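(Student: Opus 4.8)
The plan is to run a stability argument built on top of the Deza--Erd\H{o}s--Frankl-type machinery together with Frankl's exact theorem on $t$-intersecting families. Let $G$ be an $n$-vertex $(K_r,t)$-intersecting graph with $N(K_r,G)$ close to (or exceeding) $N(K_r,S(n,r,t))$, and write $\mathcal{H} = \mathcal{K}_G^r$, a $t$-intersecting $r$-graph on $n$ vertices. By the Erd\H{o}s--Ko--Rado Theorem (Theorem~\ref{THM:EKR61}) we already know $|\mathcal{H}| \le \binom{n-t}{r-t}$, but this is far weaker than what we want, since $N(K_r,S(n,r,t)) = (1+o(1))\bigl(\tfrac{n-t}{r-t}\bigr)^{r-t}$ is of lower order. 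The first step is therefore a \emph{structural dichotomy} for $t$-intersecting $r$-graphs: either $\mathcal{H}$ has a \emph{kernel} of size $t$, i.e.\ there is a $t$-set $T$ contained in all but a bounded number of edges of $\mathcal{H}$ (the ``Hilton--Milner/Frankl'' regime), or $\mathcal{H}$ is ``spread'' and has only $O(n^{r-t-1})$ edges. This dichotomy I would extract from the $\Delta$-system (sunflower) method exactly as in the classical proof of the Deza--Erd\H{o}s--Frankl and Frankl stability theorems: pass to a $\Delta$-system-free subfamily, analyze kernels, and use the $t$-intersecting property to bound the number of possible kernels and the number of edges through each.

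In the spread case, $N(K_r,G) = |\mathcal{H}| = O(n^{r-t-1}) \ll N(K_r,S(n,r,t))$, so $G$ cannot be extremal or near-extremal; this case is discarded. So assume there is a $t$-set $T$ with all but $o(n^{r-t})$ — in fact, after more care, all but a constant number of — copies of $K_r$ in $G$ containing $T$. The second step is to \textbf{clean up} $G$ using this kernel: first argue that $T$ itself must span a clique $K_t$ in $G$ (otherwise no $K_r$ contains $T$, contradiction), then show the exceptional $K_r$'s not through $T$ can be removed or absorbed with negligible loss, and that every vertex lying in many copies of $K_r$ through $T$ must be completely joined to $T$. This reduces the problem to understanding $G' = G - T$ restricted to the common neighbourhood $W$ of $T$: the copies of $K_r$ through $T$ correspond to copies of $K_{r-t}$ in $G'[W]$, and the $t$-intersecting condition on $\mathcal{K}_G^r$ forces $\mathcal{K}_{G'[W]}^{r-t}$ to be a \emph{$1$-intersecting} (indeed, when two $K_r$'s both contain $T$, their intersection is $T$ plus the intersection of the corresponding $K_{r-t}$'s, and this has size in $\{t,\dots,r-1\}$, so the $K_{r-t}$ parts intersect in $\{0,1,\dots,r-t-1\}$ — actually any size, so no constraint) hmm — here I need to be careful: the constraint is automatically satisfied for pairs both through $T$, so the real content is that copies of $K_r$ \emph{not} through $T$ interact well with those through $T$, which is what the bounded-exceptions structure handles.

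The third step is the \textbf{optimization}: having reduced to $G'[W] \subseteq K_{|W|}$ with essentially no intersecting constraint beyond $|W| \le n - t$, we want $N(K_{r-t}, G'[W])$ maximized, which by Erd\H{o}s's generalized Tur\'an theorem (or just by convexity / Kruskal--Katona-type counting together with the fact that to keep $\mathcal K^r_G$ honestly $t$-intersecting we cannot have $G'[W]$ contain two $K_{r-t}$'s meeting in fewer than... ) — the cleanest route is: the $t$-intersecting condition, applied to the few surviving copies not through $T$, forces $G'[W]$ to be $K_{r-t+1}$-free, whence by Erd\H{o}s's theorem $N(K_{r-t},G'[W]) \le N(K_{r-t}, T(|W|, r-t)) \le N(K_{r-t}, T(n-t,r-t))$, with equality analysis giving $W$ has size $n-t$, $T$ spans $K_t$, every vertex of $W$ is joined to all of $T$, and $G'[W] = T(n-t,r-t)$, i.e.\ $G \cong S(n,r,t)$. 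I expect the \textbf{main obstacle} to be precisely the clean extraction of the bounded-exceptions kernel structure from the $\Delta$-system method with the right quantitative control — showing not merely that $o(n^{r-t})$ copies avoid $T$ but that we may take $G' [W]$ to be genuinely $K_{r-t+1}$-free after bounded modification — and then making the equality/stability case watertight, since near-extremal configurations (e.g.\ $T$ not spanning a full $K_t$, or a vertex of $W$ missing one edge to $T$) must each be shown to strictly lose copies. The stability statement itself would come out as a by-product: any $G$ with $N(K_r,G) \ge (1-\varepsilon)N(K_r,S(n,r,t))$ must, by the above, be in the kernel regime with $G'[W]$ within $\varepsilon' n^{r-t}$ copies of $T(n-t,r-t)$, hence (by Tur\'an stability for $K_{r-t+1}$) within $\varepsilon'' n^2$ edge-edits of $S(n,r,t)$.
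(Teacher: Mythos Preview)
Your overall architecture is right and matches the paper: reduce to a $t$-kernel $T$, pass to the common neighbourhood $N$, show $G[N]$ is $K_{r-t+1}$-free, and finish with Erd\H{o}s's generalized Tur\'an theorem. Two points, however, need correcting.

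First, the ``all but a bounded number of edges contain $T$'' regime and the subsequent cleanup you worry about are unnecessary. The paper simply invokes the Deza--Erd\H{o}s--Frankl stability theorem (Theorem~\ref{THM:DEF-stability}): once $|\mathcal{K}_G^r| \ge 2^{r-t-1}r^2 n^{r-t-1}$, there is a $t$-set $T$ contained in \emph{every} edge of $\mathcal{K}_G^r$. No exceptions, no absorption argument, and the ``main obstacle'' you anticipated evaporates.

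Second, and more seriously, your justification for why $G[N]$ is $K_{r-t+1}$-free is wrong. You observe correctly that two copies of $K_r$ both containing $T$ automatically $t$-intersect, so you look to ``the few surviving copies not through $T$'' to supply the constraint --- but after the previous paragraph there are no such copies. The actual mechanism is different: a hypothetical $K_{r-t+1}$ on $S \subset N$ \emph{manufactures} a forbidden copy. Take any $T' \subset T$ with $|T'| = t-1$; since $S \subset N$ is complete to $T$ and $T$ is a clique, the set $T' \cup S$ spans a $K_r$ in $G$. This already contradicts ``every $K_r$ contains $T$''. Equivalently (and this is how the paper phrases it), compare $T' \cup S$ with $T \cup S'$ for any $K_{r-t}$ on $S' \subset N \setminus S$: their intersection is exactly $T'$, of size $t-1$, violating $t$-intersection. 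Hence if such an $S$ existed, every $K_{r-t}$ in $G[N]$ would meet $S$, giving $N(K_{r-t},G[N]) \le (r-t+1)n^{r-t-1}$ and contradicting the assumed lower bound. Once you have this step, Erd\H{o}s's theorem gives $N(K_r,G) = N(K_{r-t},G[N]) \le N(K_{r-t},T(n-t,r-t))$, with equality forcing $|N| = n-t$ and $G[N] \cong T(n-t,r-t)$, i.e.\ $G \cong S(n,r,t)$.
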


The corresponding stability version is as follows. 
\begin{theorem}\label{THM:EKR-Turan-Stability}
    Let $r > t \ge 1$ be integers. 
    There exist $\varepsilon_{0}>0$ and $n_{0}(r,t)$ such that the following holds for all $\varepsilon<\varepsilon_{0}$ and $n \ge n_{0}(r,t)$.
    If $G$ is an $n$-vertex $(K_{r},t)$-intersecting graph with $N(K_{r},G) \ge (1-\varepsilon) \left(\frac{n-t}{r-t}\right)^{r-t}$, then there exists a $t$-set $T\subset V(G)$ such that $G-T$ is $\ell$-partite after removing at most $2\varepsilon (n-t)^2$ edges. 
\end{theorem}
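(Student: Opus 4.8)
The plan is to link two stability phenomena by a short bijective reduction: Frankl's description of large $t$-intersecting set systems will pin down a common $t$-set $T$ lying inside every copy of $K_r$ in $G$, and then stability for the generalized Tur\'an problem $\mathrm{ex}(\cdot,K_{r-t},K_{r-t+1})$ will control the structure of $G-T$. Write $\ell:=r-t$ and $\mathcal{H}:=\mathcal{K}_G^{r}$, so $\mathcal{H}$ is a $t$-intersecting $r$-graph with $|\mathcal{H}|=N(K_r,G)\ge(1-\varepsilon)\left(\frac{n-t}{\ell}\right)^{\ell}$. I would first invoke the classical theorem of Frankl on the structure of large $t$-intersecting families: for $n$ large, any $t$-intersecting $\mathcal{F}\subseteq\binom{[n]}{r}$ that is not contained in a star $\{A:A\supseteq T_0\}$ (for some $t$-set $T_0$) satisfies $|\mathcal{F}|=O_{r,t}(n^{\ell-1})$. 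Since $|\mathcal{H}|$ is of order $n^{\ell}$, for $\varepsilon<1$ and $n\ge n_0(r,t)$ this forces $\mathcal{H}$ into a star, i.e. there is a $t$-set $T\subseteq V(G)$ contained in every copy of $K_r$ in $G$.

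Next I would extract the clique-counting problem hidden in $G$. Since $N(K_r,G)>0$, the set $T$ is a clique; let $W:=\{v\in V(G)\setminus T:N_G(v)\supseteq T\}$ be the set of vertices complete to $T$. Then $K\mapsto K\setminus T$ is a bijection between copies of $K_r$ in $G$ and copies of $K_\ell$ in $G[W]$ (every $K_r$ contains $T$ and its other $\ell$ vertices form a clique inside $W$; conversely $S\cup T$ is an $r$-clique for every $\ell$-clique $S\subseteq W$). Hence $N(K_\ell,G[W])=N(K_r,G)\ge(1-\varepsilon)\left(\frac{n-t}{\ell}\right)^{\ell}$. Moreover $G[W]$ is $K_{\ell+1}$-free: an $(\ell+1)$-clique $Q\subseteq W$ together with $T\setminus\{u\}$, for any $u\in T$, would give an $r$-clique of $G$ avoiding $u\in T$, contradicting the conclusion of the previous paragraph.

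Now I would run clique-count stability for $G[W]$ and reassemble. Put $m:=|W|\le n-t$. By Erd\H{o}s's generalized Tur\'an theorem, $N(K_\ell,G[W])\le N(K_\ell,T(m,\ell))\le(m/\ell)^{\ell}$; combining with the lower bound above yields $m\ge(1-\varepsilon)^{1/\ell}(n-t)\ge(1-\varepsilon)(n-t)$, so at most $\varepsilon(n-t)$ vertices of $G$ lie outside $T\cup W$, and also $N(K_\ell,G[W])\ge(1-\varepsilon)(m/\ell)^{\ell}\ge(1-\varepsilon)N(K_\ell,T(m,\ell))$. Applying the Erd\H{o}s--Simonovits-type stability theorem for $\mathrm{ex}(m,K_\ell,K_{\ell+1})$ with a linear rate (a $K_{\ell+1}$-free $m$-vertex graph whose number of $K_\ell$'s is within a factor $1-\varepsilon$ of $N(K_\ell,T(m,\ell))$ can be made $\ell$-partite by deleting at most $\delta(\varepsilon)m^2$ edges, with $\delta(\varepsilon)\le\varepsilon$ once $\varepsilon<\varepsilon_0$), I would make $G[W]$ $\ell$-partite after deleting at most $\varepsilon(n-t)^2$ edges. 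Deleting in addition all edges of $G-T$ meeting $V(G)\setminus(T\cup W)$ --- at most $\varepsilon(n-t)\cdot(n-t)=\varepsilon(n-t)^2$ of them --- and placing those $\le\varepsilon(n-t)$ vertices arbitrarily among the $\ell$ parts makes $G-T$ itself $\ell$-partite after removing at most $2\varepsilon(n-t)^2$ edges, as required.

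The main obstacle is the quantitative input in the last step: one needs stability for the clique-count version of Tur\'an's theorem, $\mathrm{ex}(n,K_\ell,K_{\ell+1})$, with an essentially linear dependence of the number of deleted edges on the density deficit $\varepsilon$, so that the bookkeeping lands at the constant $2$ rather than some larger $C=C(r,t)$; if only a qualitative stability is available one obtains the weaker conclusion ``$C(r,t)\,\varepsilon(n-t)^2$ edges''. By contrast, Step 1 is soft --- qualitatively it only needs $|\mathcal{H}|$ to grow faster than $n^{\ell-1}$, well below the precise Frankl threshold --- and the bijective reduction of Step 2 together with the vertex- and edge-counting at the end is routine.
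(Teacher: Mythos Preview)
Your proposal is correct and follows essentially the same route as the paper: locate the common $t$-set $T$ via a Frankl/Deza--Erd\H{o}s--Frankl kernel result, pass bijectively to $K_\ell$'s in the common neighbourhood of $T$, show that subgraph is $K_{\ell+1}$-free, bound the number of vertices outside it, and finish with a quantitative clique-count stability plus the trivial edge deletions at the leftover vertices. The linear-rate stability you correctly flag as the crux is precisely what the paper supplies as Theorem~\ref{THM:Furedi-generalized-Turan-extension} (proved by combining the Fisher--Ryan inequality with F\"uredi's edge-stability theorem), yielding at most $\varepsilon(\ell-1)(m/\ell)^2<\varepsilon(n-t)^2$ deletions inside the link and hence the constant $2$ in the final bound.
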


An extension of the Deza--Erd\H{o}s--Frankl Theorem in the generalized Tur\'{a}n setting is as follows. 

\begin{theorem}\label{THM:DEF-Turan}
    Suppose that $r \ge 3$ and $n \ge (2r)^{r+1}$ are integers and $L = \{\ell_1, \ldots, \ell_{s}\}\subset [0,r-1]$ is a subset of size $s\in [2,r-1]$.
    Then every $n$-vertex $(K_{r},t)$-intersecting graph $G$ satisfies 
    \begin{align*}
        N(K_r, G)
        \le \left(1-\frac{1}{3r}\right) \prod_{\ell\in L}\frac{n-\ell}{r-\ell}. 
    \end{align*}
\end{theorem}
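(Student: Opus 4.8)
Write $\mathcal{H}\coloneqq\mathcal{K}_{G}^{r}$, an $n$-vertex $L$-intersecting $r$-graph, so that Theorem~\ref{THM:DEF78} already gives $N(K_{r},G)=|\mathcal{H}|\le\prod_{\ell\in L}\frac{n-\ell}{r-\ell}$; the task is only to gain the constant factor $1-\frac{1}{3r}$. The plan is to re-run the $\Delta$-system proof of the Deza--Erd\H{o}s--Frankl theorem for $\mathcal{H}$ and to improve, using Tur\'{a}n's theorem, the one step in which that proof counts cliques. The single new structural input is that $G$ contains no copy of $K_{2r}$: a $K_{2r}$ would contain, for each $j\in\{0,1,\dots,r-1\}$, two copies of $K_{r}$ meeting in exactly $j$ vertices, which is impossible when $|L|\le r-1$. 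Hence by Tur\'{a}n's theorem $e(G)\le\bigl(1-\frac{1}{2r-1}\bigr)\binom{n}{2}$, and by the clique-counting theorem of Erd\H{o}s~\cite{Erdos62}, every induced subgraph satisfies $N(K_{k},G[W])\le N\bigl(K_{k},T(|W|,2r-1)\bigr)\le\bigl(1-\tfrac{1}{2r-1}+O_{r}(1/|W|)\bigr)\binom{|W|}{k}$ for every fixed $k\ge2$. Because $\frac{1}{2r-1}-\frac{1}{3r}=\Theta(1/r)$ comfortably dominates the error terms once $n\ge(2r)^{r+1}$, improving any single clique-count in the Deza--Erd\H{o}s--Frankl argument by the factor $1-\frac{1}{2r-1}$ will suffice.

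The main case is when $\{0,1\}\subseteq L$; here let $q\coloneqq\min\bigl([0,r-1]\setminus L\bigr)\ge2$, so $L=\{0,1,\dots,q-1\}\cup L''$ with $L''=L\cap[q+1,r-1]$. For a $q$-set $f\subseteq V(G)$ put $\mathcal{H}^{f}\coloneqq\{e\setminus f\colon f\subseteq e\in\mathcal{H}\}$; if $\mathcal{H}^{f}\ne\emptyset$ then $f$ is a clique of $G$, and any two edges of $\mathcal{H}$ through $f$ meet in at least $q$ vertices, hence in a size lying in $L''$ (since $q\notin L$), so $\mathcal{H}^{f}$ is an $(r-q)$-uniform $(L''-q)$-intersecting graph on $n-q$ vertices; Theorem~\ref{THM:DEF78} then gives $|\mathcal{H}^{f}|\le\prod_{\ell'\in L''-q}\frac{(n-q)-\ell'}{(r-q)-\ell'}=\prod_{\ell\in L''}\frac{n-\ell}{r-\ell}$. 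Double-counting incidences of edges of $\mathcal{H}$ with their $q$-subsets yields $\binom{r}{q}|\mathcal{H}|=\sum_{f}|\mathcal{H}^{f}|\le N(K_{q},G)\cdot\max_{f}|\mathcal{H}^{f}|$, and since $q\ge2$ and $G$ is $K_{2r}$-free one gets $N(K_{q},G)\le\bigl(1-\tfrac{1}{3r}\bigr)\binom{n}{q}$ for $n\ge(2r)^{r+1}$; combining this with $\binom{r}{q}^{-1}\binom{n}{q}=\prod_{j=0}^{q-1}\frac{n-j}{r-j}$ gives exactly the bound of the theorem. This case already covers $L=\{0,1\}$.

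It remains to treat $L$ with $q\le1$. If $0\in L$ and $1\notin L$, write $L=\{0\}\cup L''$ with $L''\subseteq[2,r-1]$ and pass to vertex links: $G[N_{G}(v)]$ is $(K_{r-1},L''-1)$-intersecting, and $r|\mathcal{H}|=\sum_{v}N(K_{r-1},G[N_{G}(v)])$. When $|L''|\ge2$, bound each summand by $\bigl(1-\tfrac{1}{3(r-1)}\bigr)\prod_{\ell\in L''}\frac{\deg v-\ell+1}{r-\ell}$ using induction on $r$ (vertices of bounded degree contribute only lower-order terms); when $|L''|=1$, say $L=\{0,\ell_{2}\}$, the link hypergraph $\mathcal{K}^{r-1}_{G[N_{G}(v)]}$ is $\{\ell_{2}-1\}$-intersecting, so by Deza's sunflower theorem $N(K_{r-1},G[N_{G}(v)])\le\frac{\deg v}{r-\ell_{2}}+O_{r}(1)$; in both sub-cases, summing over $v$ and inserting $e(G)\le\bigl(1-\tfrac{1}{2r-1}\bigr)\binom{n}{2}$ closes the argument. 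If $0\notin L$ and $L=\{t,t+1,\dots,r-1\}$, Theorem~\ref{THM:EKR-Turan} provides a far stronger bound. The one genuinely delicate case is $0\notin L$ with $L$ not a final segment, where $\mathcal{H}$ is an intersecting family and naive shadow or vertex-link reductions lose a factor of $r$ or more; there one must run the full $\Delta$-system machinery of the Deza--Erd\H{o}s--Frankl proof --- pass to a homogeneous subfamily, organise its kernels into a $\Delta$-tree realising the product $\prod_{\ell\in L}\frac{n-\ell}{r-\ell}$, and locate a level whose contribution is a count of copies of some $K_{k}$ with $k\ge2$ inside an induced subgraph of $G$, where the Tur\'{a}n improvement applies. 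Making this localization precise, and controlling the lower-order terms against the hypothesis $n\ge(2r)^{r+1}$, is the technical heart of the proof and the main obstacle.
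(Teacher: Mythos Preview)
Your argument for the case $\{0,1\}\subseteq L$ is correct and rather elegant: the observation that $G$ is $K_{2r}$-free, combined with the $q$-shadow double count and Theorem~\ref{THM:DEF78} applied to each link $\mathcal{H}^{f}$, gives the bound cleanly. This is genuinely different from the paper's route, which never bounds $N(K_{q},G)$ directly. Instead, the paper proves the base case $|L|=2$ by building an auxiliary graph $H$ on the set of cores of large sunflowers in $\mathcal{K}_{G}^{r}$, shows $H$ is $K_{\lceil 2r/\ell\rceil}$-free (because a clique in $H$ forces a $K_{2r}$ in $G$), and applies Tur\'an's theorem to $H$ rather than to $G$.

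The real gap is the case $0\notin L$ with $L$ not a final segment, which you explicitly flag as ``the main obstacle'' and leave unproved. This is not a technicality you can defer: your induction for the case $0\in L$, $1\notin L$ passes to links $G[N_{G}(v)]$, which are $(K_{r-1},L''-1)$-intersecting with $0\notin L''-1$, so unless $L''-1$ happens to be a final segment you land right back in the unresolved case. In particular, for $L=\{0,2,4\}$ with $r=6$ your scheme reduces to $(r',L')=(5,\{1,3\})$, which is precisely the case you have not handled.

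The paper's resolution of this case is much simpler than the full $\Delta$-system machinery you anticipate: if $0\notin L$ and $|\mathcal{K}_{G}^{r}|\ge 2^{s-1}r^{2}n^{s-1}$, then Theorem~\ref{THM:DEF-stability} (the structural form of Deza--Erd\H{o}s--Frankl) supplies an $\ell_{1}$-set $T$ contained in every edge of $\mathcal{K}_{G}^{r}$. Writing $N=\bigcap_{v\in T}N_{G}(v)$, the induced graph $G[N]$ is $(K_{r-\ell_{1}},L-\ell_{1})$-intersecting with $0\in L-\ell_{1}$, and one simply applies the inductive hypothesis there. If instead $|\mathcal{K}_{G}^{r}|<2^{s-1}r^{2}n^{s-1}$, the bound holds trivially for $n\ge(2r)^{r+1}$. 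Plugging this reduction into your framework would close the gap and make your inductive scheme go through.
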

\textbf{Remarks.}
\begin{itemize}
    \item It is clear that Theorem~\ref{THM:DEF-Turan} does not hold for $L = [0,r-1]$, as demonstrated by the complete graph $K_{n}$. 
    Similarly, it does not hold when $L = \{\ell\}$ for any $\ell\in [0,r-1]$, as demonstrated by the graph obtained from the vertex-disjoint union of $K_{\ell}$ and $\lfloor\frac{n-\ell}{r-\ell}\rfloor K_{r-\ell}$ by adding all pairs that have nonempty intersection with both graphs. 
    Here, $\lfloor\frac{n-\ell}{r-\ell}\rfloor K_{r-\ell}$ represents the graph consisting of $\lfloor\frac{n-\ell}{r-\ell}\rfloor$ vertex-disjoint copies of $K_{r-\ell}$.
    \item Assuming that $r > s \ge 2$ and $0 \le \ell_1 < \cdots < \ell_{s} \le r-1$, Deza--Erd\H{o}s--Frankl~\cite{DEF78} proved that for large $n$, if an $n$-vertex $\{\ell_1, \ldots, \ell_s\}$-intersecting $r$-graph has size at least $2^{s-1}r^2 n^{s-1}$, then 
    \begin{align*}
        (\ell_{2}-\ell_1) \mid (\ell_{3}-\ell_{2}) \mid \cdots \mid (\ell_{s}-\ell_{s-1}) \mid (r-\ell_{s}). 
    \end{align*}
    For $(K_{r},\{\ell_1, \ldots, \ell_s\})$-intersecting graphs, it is possible that the bound $2^{s-1}r^2 n^{s-1}$ could be improved by a factor $1-\varepsilon$ for some constant $\varepsilon>0$ using ideas from their proof and the proof of Theorem~\ref{THM:Deza-Turan-L-two}. 
    However, this seems less interesting given that they did not attempt to optimize the coefficient $2^{s-1}r^2$ in their original proof.
\end{itemize}
 
In the next section, we present some necessary definitions and preliminary results. 
In Section~\ref{SEC:Proof-EKR-Turan}, we prove Theorems~\ref{THM:EKR-Turan} and~\ref{THM:EKR-Turan-Stability}. 
In Section~\ref{SEC:DEF-Turan}, we prove Theorem~\ref{THM:DEF-Turan}. 
Some open problems are included in the last section. 
\section{Preliminaries}\label{SEC:Prelim}
In the proof of Theorems~\ref{THM:EKR-Turan} and~\ref{THM:EKR-Turan-Stability}, we will use the following result proved by Erd\H{o}s in~\cite{Erdos62}. 

\begin{theorem}[Erd\H{o}s~\cite{Erdos62}]\label{THM:Erdos-Generlaized-Turan}
    Let $n \ge \ell \ge r \ge 2$ be integers. 
    Every $n$-vertex $K_{\ell+1}$-free graph $G$ satisfies 
    \begin{align*}
        N(K_r, G) \le N\left(K_r, T(n,\ell)\right). 
    \end{align*} 
\end{theorem}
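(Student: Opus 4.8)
The plan is to prove this by Zykov-type symmetrization. Among all $n$-vertex $K_{\ell+1}$-free graphs I would take one, call it $G$, that maximizes $N(K_r,\cdot)$ and, subject to that, has the fewest edges. For a vertex $v$ write $k(v)$ for the number of copies of $K_{r-1}$ inside $G[N(v)]$, so that the number of copies of $K_r$ through $v$ equals $k(v)$ and $N(K_r,G)=N(K_r,G-v)+k(v)$ for every $v$. The symmetrization operation is: given non-adjacent $u,s$, delete $s$ together with its incident edges and reintroduce it as a twin $s'$ of $u$, i.e. with $N(s')=N(u)$ and $s'\not\sim u$. This changes the count by exactly $k(u)-k(s)$, and it preserves $K_{\ell+1}$-freeness, since $G-s$ is unchanged and any new copy of $K_{\ell+1}$ would have to use $s'$, giving a $K_\ell$ inside $N(u)$ and hence a $K_{\ell+1}$ through $u$ in $G$, a contradiction.

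First I would record two consequences for the extremal $G$. If some non-edge $us$ had $k(u)\neq k(s)$, then cloning the smaller endpoint onto the larger would strictly increase $N(K_r,G)$, a contradiction; hence every non-edge has $k(u)=k(s)$. Next I would show $G$ is complete multipartite, equivalently that $G$ contains no induced configuration $p,q,s$ with $pq\in E(G)$ and $ps,qs\notin E(G)$. Suppose such $p,q,s$ exist. The previous observation forces $k(p)=k(q)=k(s)=:t$. Replacing both $p$ and $q$ by twins of $s$ produces a $K_{\ell+1}$-free graph whose $K_r$-count exceeds that of $G$ by exactly the number $d$ of copies of $K_r$ containing the edge $pq$; extremality gives $d=0$. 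But then deleting the edge $pq$ keeps $G$ $K_{\ell+1}$-free and does not change $N(K_r,G)$, producing an extremal graph with fewer edges and contradicting the choice of $G$. Therefore $G$ is complete multipartite.

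It then remains to optimize over complete multipartite graphs. Since a complete $m$-partite graph contains $K_m$, the $K_{\ell+1}$-freeness of $G$ forces at most $\ell$ parts, say of sizes $n_1,\dots,n_m$ with $m\le\ell$ and $\sum n_i=n$; here $N(K_r,G)=e_r(n_1,\dots,n_m)$, the $r$-th elementary symmetric polynomial of the part sizes. I would finish by a smoothing argument: viewing $e_r$ as a function of two part sizes $n_i,n_j$ with their sum and all other parts fixed, one has $e_r=n_in_j\,e_{r-2}(\text{rest})+\text{const}$, so the count can only increase by balancing $n_i$ and $n_j$ and by passing to the full $\ell$ parts. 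Hence $e_r$ is maximized by the balanced $\ell$-partition, i.e. by $T(n,\ell)$ (this uses $n\ge\ell\ge r$, so that the balanced $\ell$-partite graph is well defined and actually contains $K_r$), yielding $N(K_r,G)\le N(K_r,T(n,\ell))$.

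I expect the main obstacle to be the passage from ``symmetrization does not decrease the count'' to ``the extremal graph is genuinely complete multipartite'': the single-clone move is only non-strict, so one must rule out the equality case, and the clean way to do so is the minimum-edges tie-breaker together with the observation that the offending edge $pq$ lies in no $K_r$ and can therefore be deleted for free. Verifying the exact count change $k(u)-k(s)$ of a single clone and $d$ of the double clone, and checking that cloning never creates a $K_{\ell+1}$, are the routine but essential computations underpinning the argument.
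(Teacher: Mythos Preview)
The paper does not prove this theorem: it is quoted in the Preliminaries section as a known result of Erd\H{o}s~\cite{Erdos62} and is used as a black box in the proofs of Theorems~\ref{THM:EKR-Turan} and~\ref{THM:EKR-Turan-Stability}. Your Zykov-type symmetrization argument is correct and is one of the standard proofs of this result; the count changes you state ($k(u)-k(s)$ for a single clone, and exactly $d$ for the double clone onto $s$) are accurate, the $K_{\ell+1}$-freeness is preserved for the reason you give, the minimum-edge tie-breaker cleanly disposes of the equality case, and the final smoothing step on $e_r(n_1,\dots,n_m)$ (padding with zero parts up to $\ell$ and then balancing pairs) is routine. There is nothing to compare against in the paper itself.
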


Building upon a nice theorem of F\"{u}redi~\cite{Furedi15} and an earlier result by Fisher--Ryan~\cite{FR92}, we establish the following stability result regarding Theorem~\ref{THM:Erdos-Generlaized-Turan}.
It is worth mentioning that a weaker version of this result is already well-known in Extremal Graph Theory and has appeared in various works, including~\cite{Mubayi06,MQ20,Liu21}.

\begin{theorem}\label{THM:Furedi-generalized-Turan-extension}
    Let $\ell \ge r \ge 2$ be integers. 
    Suppose that $G$ is an $n$-vertex $K_{\ell+1}$-free graph with $N(K_{r}, G) \ge \binom{\ell}{r}\left(\frac{n}{\ell}\right)^r - m$ for some integer $m \ge 0$. 
    Then $G$ can be made $\ell$-partite by removing at most $\frac{2\binom{\ell}{2}\left(\frac{n}{\ell}\right)^{2}m}{r\binom{\ell}{r}\left(\frac{n}{\ell}\right)^{r}+(r-2)m} \le \frac{2\binom{\ell}{2}m}{r\binom{\ell}{r}\left(\frac{n}{\ell}\right)^{r-2}}$ edges. 
    In particular, for every $\varepsilon>0$, every $n$-vertex $K_{r+1}$-free graph $G$ with $N(K_r, G) \ge (1-\varepsilon)\left(\frac{n}{r}\right)^{r}$ is $r$-partite after removing at most $\varepsilon(r-1)\left(\frac{n}{r}\right)^{2}$ edges. 
\end{theorem}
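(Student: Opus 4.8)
The plan is to reduce the statement to two quantitative inputs and chain them: F\"{u}redi's sharp edge-stability version of Tur\'{a}n's theorem~\cite{Furedi15}, which converts a near-maximal edge count into near-$\ell$-partiteness with an explicit deletion bound, and a Fisher--Ryan-type inequality~\cite{FR92} bounding $N(K_r,G)$ by a function of $e(G)$ that is tight on complete multipartite graphs. Composing them turns the hypothesis ``$N(K_r,G)$ is close to $\binom{\ell}{r}(n/\ell)^r$'' first into ``$e(G)$ is close to $t_\ell(n)$'' and then into ``$G$ is $\ell$-partite after few deletions''; the standard stability version of Erd\H{o}s's Theorem~\ref{THM:Erdos-Generlaized-Turan} does not track constants, so the whole point is to keep every estimate explicit.

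In detail I would proceed as follows. Write $Q:=\binom{\ell}{r}(n/\ell)^r$ and $P:=\binom{\ell}{2}(n/\ell)^2$, and note that Maclaurin's inequality applied to the part sizes of $T(n,\ell)$ gives the favourably-directed bounds $N(K_r,T(n,\ell))\le Q$ and $t_\ell(n)\le P$, so the hypothesis reads $N(K_r,G)\ge Q-m$. \emph{Step one:} apply the Fisher--Ryan inequality to the $K_{\ell+1}$-free graph $G$; it yields an upper bound for $N(K_r,G)$ that is an increasing function of $e(G)$ equalling $Q$ at $e(G)=P$, and inverting it produces a lower bound $e(G)\ge\psi(m)$ for an explicit $\psi$ with $\psi(0)=P$. \emph{Step two:} apply F\"{u}redi's theorem, which says $G$ can be made $\ell$-partite by deleting at most $t_\ell(n)-e(G)\le P-\psi(m)$ edges. \emph{Step three:} check the elementary inequality $P-\psi(m)\le\frac{2Pm}{rQ+(r-2)m}$; in the range where this is not yet sufficient -- namely $m$ a constant fraction of $Q$ -- fall back instead on the trivial fact that every graph with at most $P$ edges becomes $\ell$-partite after deleting at most $P/\ell\le P/(r-1)$ edges, which equals the claimed right-hand side at $m=Q$ and, since that right-hand side is increasing in $m$, handles the remaining range. \emph{Step four:} the ``in particular'' clause is the case $\ell=r$, where $\binom{\ell}{r}=1$; taking $m:=(n/r)^r-N(K_r,G)\le\varepsilon(n/r)^r$, the weaker bound $\frac{2\binom{\ell}{2}m}{r\binom{\ell}{r}(n/\ell)^{r-2}}$ evaluates to exactly $(r-1)\varepsilon(n/r)^2$.

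I expect step three -- the calibration -- to be the main obstacle. F\"{u}redi's deficiency bound $t_\ell(n)-e(G)$ is only effective when $G$ is genuinely close to $T(n,\ell)$, whereas the Fisher--Ryan conversion $N(K_r,G)\mapsto e(G)$ loses ground as $G$ moves away from the extremal configuration, so the two estimates have to be balanced against one another -- plausibly via an explicit case split on how close $N(K_r,G)$ is to $\binom{\ell}{r}(n/\ell)^r$ -- in order to recover exactly the stated rational function, whose denominator $r\binom{\ell}{r}(n/\ell)^r+(r-2)m$ looks to be dictated by precisely this balancing. A secondary technical point is the $\ell\nmid n$ case, where one must move between the idealised quantities $\binom{\ell}{r}(n/\ell)^r$, $\binom{\ell}{2}(n/\ell)^2$ and the true values $N(K_r,T(n,\ell))$, $t_\ell(n)$; since all the relevant inequalities point the right way this should be routine.
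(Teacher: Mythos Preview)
Your plan matches the paper's: chain the Fisher--Ryan inequality with F\"uredi's edge-stability theorem, so that a lower bound on $N(K_r,G)$ becomes a lower bound on $e(G)$ and then an upper bound on the number of edges one must delete. Where you anticipate a case split and fallback in step three, the paper instead fixes $x:=\tfrac{2m}{rQ+(r-2)m}$ at the outset and appeals to the single elementary inequality $(1+t)^s\le 1+\tfrac{st}{1-(s-1)t}$ (their Fact~\ref{FACT:Inequality-a}, applied with $s=r/2$) to show that $|G|<(1-x)P$ would force $N(K_r,G)<Q-m$, giving $e(G)\ge(1-x)P$ and hence the deletion bound $xP$ in one stroke---so the balancing you worry about is absorbed into the choice of $x$, and no separate regime for large $m$ is required.
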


The following theorem may be considered as a stability version of Theorem~\ref{THM:DEF78}. 

\begin{theorem}[Deza--Erd\H{o}s--Frankl~\cite{DEF78}]\label{THM:DEF-stability}
    Let $r \ge 3$ and $n \ge 2^r r^3$ be integers. 
    Suppose that $L = \{\ell_1, \ldots, \ell_{s}\} \subset [0,r-1]$ is a subset of size $s \le r$ and $\ell_1 < \cdots < \ell_{s}$. 
    Then every $L$-intersecting $r$-graph $\mathcal{H}$ on $n$ vertices with $|\mathcal{H}| \ge 2^{s-1}r^2 n^{s-1}$ satisfies $\left|\bigcap_{e\in \mathcal{H}} e\right| \ge \ell_{1}$. 
\end{theorem}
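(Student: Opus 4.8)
This is a statement established inside the proof of the Deza--Erd\H{o}s--Frankl Theorem, and the plan is to reprove it by the $\Delta$-system (sunflower) method, with Deza's theorem on families of constant pairwise intersection as the base case. First reduce to the case $\ell_1\ge 1$: when $\ell_1=0$ the assertion $\bigl|\bigcap_{e\in\mathcal{H}}e\bigr|\ge 0$ is vacuous. With $\ell_1\ge 1$ the family $\mathcal{H}$ is intersecting, and the whole difficulty becomes: \emph{locate inside $\mathcal{H}$ a sunflower with at least $r+1$ petals whose core has size exactly $\ell_1$.}

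The reason this suffices is a \textbf{spreading lemma}: if $\mathcal{S}\subseteq\mathcal{H}$ is a sunflower with at least $r+1$ petals and core $Y$ with $|Y|=\ell_1$, then $Y\subseteq f$ for every $f\in\mathcal{H}$, hence $\bigl|\bigcap_{e\in\mathcal{H}}e\bigr|\ge\ell_1$. Indeed, the petals $\{e\setminus Y:e\in\mathcal{S}\}$ are nonempty (as $|Y|=\ell_1\le r-1$) and pairwise disjoint, so any $f\in\mathcal{H}$, having only $r$ vertices, is disjoint from $e\setminus Y$ for some $e\in\mathcal{S}$; then $f\cap e=f\cap Y\subseteq Y$, and since $|f\cap e|\in L$ and $\min L=\ell_1=|Y|$, we conclude $|f\cap Y|=\ell_1=|Y|$, i.e.\ $Y\subseteq f$. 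Note that this step crucially uses $|Y|=\min L$: a sunflower core of size $\ell_j$ with $j\ge 2$ only yields $|f\cap Y|\in\{\ell_1,\dots,\ell_j\}$, which need not force $Y\subseteq f$.

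To produce the required sunflower I would argue by induction on $s=|L|$, proving the stronger dichotomy: every $r$-uniform $L$-intersecting $\mathcal{H}$ on $n\ge 2^rr^3$ vertices with $\min L\ge 1$ either satisfies $|\mathcal{H}|<2^{s-1}r^2n^{s-1}$ or contains a sunflower with $r+1$ petals and core of size $\min L$. The base case $s=1$ is exactly Deza's theorem (a family of $r$-sets with constant pairwise intersection and more than $r^2-r+1$ members is such a sunflower), and $2^{0}r^2n^{0}=r^2>r^2-r+1$. For the inductive step, fix $e_0\in\mathcal{H}$; since $|e\cap e_0|\ge\ell_1$ for all $e$, some $\ell_1$-subset $B$ of $e_0$ lies in at least $|\mathcal{H}|/\binom{r}{\ell_1}$ edges, and the link $\mathcal{H}_B=\{e\setminus B:B\subseteq e\in\mathcal{H}\}$ is an $(r-\ell_1)$-graph that is $(L-\ell_1)$-intersecting with $L-\ell_1=\{0,\ell_2-\ell_1,\dots,\ell_s-\ell_1\}$. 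Take a maximum matching in $\mathcal{H}_B$: if it has at least $r+1$ edges, adding $B$ back gives a sunflower in $\mathcal{H}$ with core $B$ of size $\ell_1$ and $r+1$ petals, as wanted; otherwise the matching number is at most $r$, so a vertex set of size at most $r(r-\ell_1)\le r^2$ meets every edge of $\mathcal{H}_B$, whence $\mathcal{H}_B$ splits into at most $r^2$ stars, each of which (the edges through a fixed vertex) is $M$-intersecting with $M=\{\ell_2-\ell_1,\dots,\ell_s-\ell_1\}$ of size $s-1$, and then one recurses, the factor-$2$ slack in the threshold and the bound $n\ge 2^rr^3$ absorbing the arithmetic.

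The main obstacle is precisely the final bookkeeping: a sunflower handed up from the recursion on a star of $\mathcal{H}_B$ has core of size $(\ell_2-\ell_1)+\ell_1=\ell_2>\ell_1$ after one reassembles through the link map, so this naive peeling delivers a sunflower with the wrong core size, and the spreading lemma does not apply. One must therefore organize the iteration more carefully, bounding the edges ``escaping'' the desired $\ell_1$-core directly (using that the $(L-\ell_1)$-intersecting link has minimum $0$, so its matching-heavy part is controlled and its constant-intersection parts are sunflowers that can be fed back recursively) so that one either genuinely exhibits a sunflower with core of size $\ell_1$ or drives the count below $2^{s-1}r^2n^{s-1}$; this is the point at which the argument must follow the ``kernel'' bookkeeping of Deza--Erd\H{o}s--Frankl, and once it is carried out the spreading lemma converts the sunflower into the claimed common $\ell_1$-set.
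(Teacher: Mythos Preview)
The paper does not give its own proof of this statement: Theorem~\ref{THM:DEF-stability} is quoted from Deza--Erd\H{o}s--Frankl~\cite{DEF78} as a preliminary result and is used as a black box in Sections~\ref{SEC:Proof-EKR-Turan} and~\ref{SEC:DEF-Turan}. So there is nothing in the present paper to compare your argument against; any assessment must be on the internal merits of your sketch.

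On those merits: your spreading lemma is correct and is indeed the key observation --- a sunflower with $r+1$ petals and core of size exactly $\ell_1$ forces that core into every edge. The reduction to $\ell_1\ge 1$ and the base case via Deza's theorem are also fine. The genuine gap is the one you yourself flag: the na\"{i}ve recursion through a star of the link $\mathcal{H}_B$ returns a sunflower whose core has size $\ell_2$, not $\ell_1$, so the spreading lemma does not apply. You then write that ``one must therefore organize the iteration more carefully'' and that ``once it is carried out'' the result follows --- but you do not carry it out. That is precisely the nontrivial part of the Deza--Erd\H{o}s--Frankl argument, and deferring to their ``kernel bookkeeping'' without reproducing it means your proposal is an outline with the central step missing rather than a proof.

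If you want to close the gap along the lines you indicate, one workable route is to prove directly (by induction on $s$) the counting statement ``if no $\ell_1$-set lies in more than $r\cdot 2^{s-2}r^2 n^{s-2}$ edges, then $|\mathcal{H}|<2^{s-1}r^2 n^{s-1}$'': the point is that once \emph{some} $\ell_1$-set $B$ is very popular, its link is $\{0,\ell_2-\ell_1,\dots\}$-intersecting and large enough that, by averaging over a fixed edge, one of its vertices has large $\{\ell_2-\ell_1-1,\dots\}$-intersecting link, to which induction applies --- and the conclusion of the induction is not a sunflower but the size bound itself, avoiding the core-size mismatch. Alternatively, follow~\cite{DEF78} directly.
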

\textbf{Remark.}
The bound $|\mathcal{H}| \ge 2^{s-1}r^2 n^{s-1}$ can be improved in some cases. 
In particular, extending the classical Hilton--Milner Theorem~\cite{HM67}, Frankl obtained a tight lower bound for the case $L = \{t, t+1, \ldots, r-1\}$ in~\cite{Frankl78}. 

In the proof of Theorem~\ref{SEC:DEF-Turan}, we will use the following old result of Deza~\cite{Deza74}. 

An $r$-graph $\mathcal{H}$ is an \textbf{$\ell$-sunflower}\footnote{Sunflowers are also called $\Delta$-systems.} for some $\ell \le r-1$ if there exists an $\ell$-set $C\subset V(\mathcal{H})$ such that $e\cap e' = C$ for all distinct edges $e.e'\in \mathcal{H}$.
The set $C$ is called the \textbf{core} of $\mathcal{H}$. 

\begin{theorem}[Deza~\cite{Deza74}]\label{THM:Deza74}
    Let $r > \ell \ge 1$ be integers. 
    Every $\{\ell\}$-intersecting $n$-vertex $r$-graph with at least $r^2-r+1$ edges is an $\ell$-sunflower. 
\end{theorem}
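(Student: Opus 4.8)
My plan is to prove the statement in its contrapositive form: a $\{\ell\}$-intersecting $r$-graph that fails to be an $\ell$-sunflower has few edges. First I would record the elementary equivalence that $\mathcal H$ is an $\ell$-sunflower if and only if $|\bigcap_{e\in\mathcal H}e|=\ell$. Indeed, since any two edges meet in exactly $\ell$ vertices, $\bigcap_{e\in\mathcal H}e$ always has size at most $\ell$; and if it has size exactly $\ell$ then it is contained in every pairwise intersection and hence equals every pairwise intersection. Writing $K=\bigcap_{e\in\mathcal H}e$ with $|K|=j$, I would pass to the family $\mathcal H'=\{e\setminus K:e\in\mathcal H\}$, which is $(r-j)$-uniform, $\{\ell-j\}$-intersecting, has the same number of edges, and has empty common intersection; if $\mathcal H$ is not a sunflower then $\ell-j\ge 1$. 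Since $x\mapsto x^2-x+1$ is increasing for $x\ge 1$ and $r-j\le r$, it suffices to prove the reduced statement: a $\{\lambda\}$-intersecting $k$-uniform family with $\lambda\ge 1$ and empty common intersection has at most $k^2-k+1$ edges.

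For the reduced statement, set $m=|\mathcal H'|$ and let $\Delta$ be the maximum vertex degree; empty common intersection means no vertex lies in all edges, so $\Delta\le m-1$. Fixing any edge $A$ and summing the identity $\sum_{v\in A}(d(v)-1)=\sum_{e\ne A}|e\cap A|=(m-1)\lambda$ shows that the average degree on $A$ is $1+(m-1)\lambda/k$, whence $\Delta\ge 1+(m-1)\lambda/k$. Using $\lambda\ge 1$ this rearranges to $m\le k(\Delta-1)+1$, so the whole problem reduces to bounding the maximum degree $\Delta$. The projective plane of order $k-1$ (with $\lambda=1$) realizes $\Delta=k$ and $m=k^2-k+1$ with empty common intersection and no sunflower structure, which both shows sharpness and signals that the target bound is $\Delta\le k$.

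To bound $\Delta$, I would fix a vertex $x$ of maximum degree and, using the empty common intersection, an edge $C$ with $x\notin C$. Each of the $\Delta$ edges through $x$ meets $C$ in a $\lambda$-subset of $C$, and for two such edges $e,e'$ one has $e\cap e'\ni x$ while $x\notin C$, so $(e\cap C)\cap(e'\cap C)\subseteq(e\cap e')\setminus\{x\}$ has size at most $\lambda-1$. For $\lambda=1$ this forces the $\Delta$ traces $e\cap C$ to be $\Delta$ distinct single vertices of the $k$-set $C$, giving $\Delta\le k$ and hence $m\le k^2-k+1$, completing that case. For $\lambda\ge 2$ I would induct on $\lambda$: the link family $\{e\setminus\{x\}:x\in e\}$ is $\{\lambda-1\}$-intersecting and $(k-1)$-uniform, and the inductive hypothesis lets me locate a common vertex of (a large sub-family of) the link, i.e.\ enlarge the core shared by the edges through $x$; once these edges form a sunflower with a core of size $\lambda$, their petals outside $C$ are disjoint, and counting the sizes $|C\cap e|$ over them again yields $\Delta\le k$, contradicting $\Delta\ge k+1$.

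The main obstacle is precisely the step for $\lambda\ge 2$: unlike the case $\lambda=1$, a vertex of maximum degree need not lie in every edge (small $\{\lambda\}$-intersecting families with a vertex of degree $k+1$ and empty common intersection do exist), so the clean \emph{one distinct point per petal} contradiction is not immediately available. Overcoming this requires iterating the peeling of common-core vertices from the link and carefully reconciling the inductive size threshold $(k-1)^2-(k-1)+2$ for the link with the degree lower bound $\Delta\ge 1+(m-1)\lambda/k$ that is actually guaranteed, tracking how $(k,\lambda)$ evolve so that the final bound stays at $k^2-k+1$. I expect this bookkeeping, rather than any single inequality, to be the crux; the backbone inequality $m\le k(\Delta-1)+1$ and the avoiding-edge trace argument are the two engines, and the projective plane of order $r-1$ certifies that the resulting threshold is best possible.
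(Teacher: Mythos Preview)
The paper does not supply its own proof of this statement: Theorem~\ref{THM:Deza74} is quoted from~\cite{Deza74} as a black box and is invoked once, inside the proof of Claim~\ref{CLAIM:U-is-union-C}. There is therefore no ``paper's proof'' to compare your argument against.

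Evaluated on its own terms, your reduction to empty common core, the backbone inequality $m\le k(\Delta-1)+1$, and the trace argument for $\lambda=1$ are all correct and deliver the sharp bound in that case. (A side remark: the projective plane you invoke has exactly $r^{2}-r+1$ lines and is \emph{not} a sunflower, so the threshold in the paper's statement should read $r^{2}-r+2$ rather than $r^{2}-r+1$; this is harmless for the paper, which only uses the cruder bound $r^{2}$.)

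The gap you yourself flag for $\lambda\ge 2$ is genuine, and the inductive patch you sketch does not close it. From $m\ge k^{2}-k+2$ and $\Delta\ge 1+(m-1)\lambda/k$ you obtain only $\Delta\gtrsim\lambda k$, whereas applying the inductive hypothesis to the $(k-1)$-uniform, $\{\lambda-1\}$-intersecting link requires $\Delta\ge(k-1)^{2}-(k-1)+2\sim k^{2}$ to force that link to be a sunflower. For bounded $\lambda$ and large $k$ these thresholds never meet, and if the link is \emph{not} a sunflower the best you learn is $\Delta\le(k-1)^{2}-(k-1)+1$, which fed back through $m\le 1+k(\Delta-1)/\lambda$ yields only a cubic bound in $k$. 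So ``tracking how $(k,\lambda)$ evolve'' is not mere bookkeeping: an additional idea is needed to force sunflower structure on the star of a high-degree vertex when $\lambda\ge 2$. Your endgame --- once the edges through $x$ form a sunflower with core of size $\lambda$, the disjoint petals traced on an avoiding edge $C$ give $\Delta\le k$ --- is correct; the missing piece is getting to that sunflower structure in the first place.
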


Given an $r$-graph $\mathcal{H}$, the \textbf{link} of $v\in V(\mathcal{H})$ is defined as 
\begin{align*}
    L_{\mathcal{H}}(v) 
    \coloneqq \left\{S \in \binom{V(\mathcal{H})\setminus\{v\}}{r-1} \colon S\cup \{v\} \in \mathcal{H}\right\}. 
\end{align*}
The \textbf{degree} of $v$ is $d_{\mathcal{H}}(v) \coloneqq |L_{\mathcal{H}}(v)|$. 
\section{Proof of Theorem~\ref{THM:Furedi-generalized-Turan-extension}}\label{SEC:Furedi-generalized-Turan-extension}
We establish Theorem~\ref{THM:Furedi-generalized-Turan-extension} in this section, relying on a straightforward combination of the following results.

\begin{fact}\label{FACT:Inequality-a}
    Suppose that $r \ge 2$ and $-1 \le x < \frac{1}{r-1}$. 
    Then 
    \begin{align*}
        (1+x)^{r} \le 1 + \frac{rx}{1-(r-1)x}. 
    \end{align*}
\end{fact}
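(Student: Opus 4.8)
The plan is to reduce the claimed inequality to a one-line application of the AM--GM inequality. First I would rewrite the right-hand side over a common denominator,
\[
1 + \frac{rx}{1-(r-1)x} = \frac{1-(r-1)x+rx}{1-(r-1)x} = \frac{1+x}{1-(r-1)x},
\]
which is legitimate since $1-(r-1)x>0$ throughout the stated range $-1\le x<\tfrac{1}{r-1}$. Clearing this denominator, the assertion becomes equivalent to
\[
(1+x)^{r}\bigl(1-(r-1)x\bigr) \le 1+x .
\]

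Next I would dispose of the boundary value $x=-1$, where both sides vanish and equality holds. For $x\in(-1,\tfrac{1}{r-1})$ we have $1+x>0$, so after dividing it suffices to prove
\[
(1+x)^{r-1}\bigl(1-(r-1)x\bigr) \le 1 .
\]
I would now apply AM--GM to the multiset consisting of $r-1$ copies of $1+x$ together with the single number $1-(r-1)x$; all $r$ of these quantities are nonnegative precisely because $-1\le x<\tfrac{1}{r-1}$. Their arithmetic mean equals
\[
\frac{(r-1)(1+x) + \bigl(1-(r-1)x\bigr)}{r} = \frac{r}{r} = 1 ,
\]
so their geometric mean is at most $1$, which is exactly the displayed inequality. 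Multiplying back by $1+x$ and then dividing by $1-(r-1)x$ recovers the bound claimed in the Fact.

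Since the argument is elementary, there is no serious obstacle here; the only points requiring a little care are verifying that $1-(r-1)x$ is strictly positive on the stated range (so that the division is valid and the extra AM--GM term is nonnegative) and handling the endpoint $x=-1$ separately. An equally short alternative would be a single-variable calculus argument: set $h(x)=(1+x)^{r-1}(1-(r-1)x)$, compute $h'(x) = -r(r-1)\,x\,(1+x)^{r-2}$, and note that $h$ is increasing on $(-1,0)$ and decreasing on $(0,\tfrac1{r-1})$, so $h(x)\le h(0)=1$; I would present whichever of the two versions reads more cleanly.
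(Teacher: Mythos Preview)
The paper states this Fact without proof, so there is no argument to compare against; your proof is correct and would fill the gap cleanly.

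One small caveat worth flagging: the AM--GM version you lead with uses $r-1$ copies of $1+x$ and therefore tacitly assumes $r$ is a positive integer. In the paper the Fact is actually invoked in the proof of Theorem~\ref{THM:Furedi-generalized-Turan-extension} with $r/2$ in the role of $r$, which need not be an integer (and may even equal $1$ or $3/2$ when $r\in\{2,3\}$). Your calculus alternative, showing that $h(x)=(1+x)^{r-1}\bigl(1-(r-1)x\bigr)$ satisfies $h'(x)=-r(r-1)x(1+x)^{r-2}$ and hence $h(x)\le h(0)=1$, handles arbitrary real $r\ge 1$, so that is the version to present here.
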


\begin{theorem}[Fisher--Ryan~\cite{FR92}]\label{THM:Fisher-Ryan}    
    Let $\ell \ge r \ge 2$ be integers. 
    Suppose that $G$ is an $n$-vertex $K_{\ell+1}$-free graph. 
    Then 
    \begin{align*}
        N(K_r, G)
        \le \binom{\ell}{r}\left({|G|}/{\binom{\ell}{2}}\right)^{\frac{r}{2}}. 
    \end{align*}
\end{theorem}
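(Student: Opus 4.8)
The plan is to deduce the bound from Maclaurin's inequality after homogenising the clique count. For a weight vector $\mathbf{x} = (x_v)_{v \in V(G)}$ with $x_v \ge 0$ and each $j \ge 2$, define the weighted clique sum over the $j$-cliques $\mathcal{K}_{G}^{j}$ of $G$ by
\begin{align*}
P_j(\mathbf{x}) \coloneqq \sum_{S \in \mathcal{K}_{G}^{j}} \prod_{v \in S} x_v,
\end{align*}
so that $P_2(\mathbf{x}) = \sum_{uv \in E(G)} x_u x_v$, while $N(K_j, G) = P_j(\mathbf{1})$ and $|G| = P_2(\mathbf{1})$. I would prove the stronger weighted statement that
\begin{align*}
P_r(\mathbf{x}) \le \binom{\ell}{r}\left(\frac{P_2(\mathbf{x})}{\binom{\ell}{2}}\right)^{r/2}
\end{align*}
holds for every $\mathbf{x} \ge 0$, the theorem then following by setting $\mathbf{x} = \mathbf{1}$. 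Since both sides are homogeneous of degree $r$ in $\mathbf{x}$, it suffices to bound the scale-invariant ratio $\Phi(\mathbf{x}) \coloneqq P_r(\mathbf{x})/P_2(\mathbf{x})^{r/2}$ over the simplex $\{\mathbf{x} \ge 0 : \sum_v x_v = 1\}$, discarding the degenerate case $P_2 = 0$ (where $G$ is edgeless and $N(K_r,G) = 0$). The target becomes $\max \Phi \le \binom{\ell}{r}/\binom{\ell}{2}^{r/2}$.

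The heart of the argument is a symmetrisation reducing a maximiser of $\Phi$ to a complete multipartite graph. The key observation is that two non-adjacent vertices never lie in a common clique, so for any non-adjacent pair $u, v$ both $P_r$ and $P_2$ are affine along the weight transfer $\mathbf{x} \mapsto \mathbf{x} + t(\mathbf{e}_u - \mathbf{e}_v)$. Running a Motzkin--Straus/Zykov-type symmetrisation along such transfers, I would show that some maximiser $\mathbf{x}^{*}$ of $\Phi$ is supported on a set $U$ for which non-adjacency is transitive, i.e.\ $G[U]$ is complete multipartite; the $K_{\ell+1}$-freeness of $G$ then caps the number of parts at $\ell$.

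On such a configuration the computation collapses to elementary symmetric polynomials: writing $w_i$ for the total $\mathbf{x}^{*}$-weight of the $i$-th part ($1 \le i \le k \le \ell$), every clique meets each part in at most one vertex, whence $P_r(\mathbf{x}^{*}) = e_r(w_1, \ldots, w_k)$ and $P_2(\mathbf{x}^{*}) = e_2(w_1, \ldots, w_k)$. Padding the weight vector with $\ell - k$ zeros and applying Maclaurin's inequality to these $\ell$ nonnegative reals gives $e_r/\binom{\ell}{r} \le (e_2/\binom{\ell}{2})^{r/2}$, which is exactly $\Phi(\mathbf{x}^{*}) \le \binom{\ell}{r}/\binom{\ell}{2}^{r/2}$; equality forces all parts equal, recovering the balanced complete multipartite (Tur\'{a}n) extremiser.

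I expect the symmetrisation step to be the main obstacle. Unlike the classical Motzkin--Straus setting, here one optimises a ratio rather than a single multilinear form, so a weight transfer between a non-adjacent pair moves $\Phi$ along a M\"{o}bius-type function rather than an affine one, and the extremiser is genuinely multipartite rather than clique-supported. Making precise that a minimal-support maximiser has transitive non-adjacency on its support --- ruling out a configuration $u \not\sim v$, $v \not\sim w$ with $u \sim w$ inside the support --- is the delicate point; everything else (Maclaurin's inequality, the homogenisation, and the degenerate edgeless case) is routine.
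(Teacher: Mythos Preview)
The paper does not give a proof of this statement; Theorem~\ref{THM:Fisher-Ryan} is quoted from Fisher--Ryan~\cite{FR92} and used as a black box in the proof of Theorem~\ref{THM:Furedi-generalized-Turan-extension}. So there is no proof in the paper to compare against.

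On the proposal itself: the endgame is fine---once the maximiser of $\Phi$ is supported on a complete $k$-partite subgraph with $k\le\ell$, Maclaurin's inequality applied to the part-weights gives exactly the bound. The difficulty is precisely where you place it, and it is a genuine gap rather than a routine detail. Along a transfer between non-adjacent $u,v$ in the support, write $P_r(t)=a_r+b_rt$ and $P_2(t)=a_2+b_2t$; the first-order condition at a maximiser forces $b_r a_2=\tfrac{r}{2}b_2 a_r$, and then
\[
(\log\Phi)''(0)=-\frac{b_r^2}{a_r^2}+\frac{r}{2}\,\frac{b_2^2}{a_2^2}=\frac{r(2-r)}{4}\,\frac{b_2^2}{a_2^2}\le 0,
\]
with strict inequality whenever $r>2$ and $b_2\ne 0$. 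Thus the transfer line has a \emph{strict} interior maximum, so you cannot push weight to the boundary to shrink the support; choosing a minimal-support maximiser only rules out the degenerate case $b_2=0$ (where $\Phi$ is constant along the line), which is the wrong direction. In particular, nothing in the argument excludes a configuration $u\not\sim v\not\sim w$ with $u\sim w$ in the support. The same obstruction blocks the obvious graph-level Zykov move (cloning a non-neighbour): it controls $N(K_r,\cdot)$ and $|\cdot|$ separately but not their ratio. Fisher and Ryan's own argument avoids this altogether by proving the chain $\bigl(N(K_j,G)/\binom{\ell}{j}\bigr)^{1/j}$ is nonincreasing in $j$ via a direct counting inequality between consecutive clique numbers; the stated bound is then the comparison $j=r$ versus $j=2$. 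If you want to rescue a symmetrisation-style proof, you will need an additional idea---for instance, optimising $P_r$ under the \emph{pair} of constraints $P_2=c$ and $\sum x_v=1$ and analysing the resulting KKT system more carefully---rather than the single-ratio Motzkin--Straus reduction.
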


\begin{theorem}[F\"{u}redi~\cite{Furedi15}]\label{THM:Furedi}
    Suppose that $\ell \ge 2$ and $G$ is an $n$-vertex $K_{\ell+1}$-free graph with $|G| \ge |T(n,\ell)| - m$.
    Then $G$ can be made $\ell$-partite by removing at most $m$ edges. 
\end{theorem}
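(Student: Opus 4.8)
The plan is to prove the equivalent sharp form of the statement: for every $n$-vertex $K_{\ell+1}$-free graph $G$, if $f_{\ell}(G)$ denotes the minimum number of edges whose deletion makes $G$ $\ell$-partite, then
\begin{align*}
    f_{\ell}(G) \le |T(n,\ell)| - |G|.
\end{align*}
This implies the theorem at once, since the hypothesis $|G| \ge |T(n,\ell)| - m$ is exactly $|T(n,\ell)| - |G| \le m$. I would prove the sharp form by induction on $\ell$. The base case $\ell = 1$ is immediate: a $K_{2}$-free graph has no edges, is already $1$-partite, and $|T(n,1)| = 0$.

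For the inductive step with $\ell \ge 2$, let $v$ be a vertex of maximum degree $\Delta$, set $W = N(v)$ (so $|W| = \Delta$), and let $U = V(G)\setminus W$ (so $v \in U$ and $|U| = n - \Delta$). Because $G$ is $K_{\ell+1}$-free, the graph $G[W]$ is $K_{\ell}$-free, i.e. $K_{(\ell-1)+1}$-free, on $\Delta$ vertices. Applying the induction hypothesis to $G[W]$ produces a partition $W = W_{1}\cup\cdots\cup W_{\ell-1}$ whose number of within-part edges is at most $|T(\Delta,\ell-1)| - |G[W]|$. I would then take the $\ell$-partition $(W_{1},\ldots,W_{\ell-1},U)$ of $V(G)$; its within-part edges are those inside the $W_{i}$ together with those inside $U$, so
\begin{align*}
    f_{\ell}(G) \le |T(\Delta,\ell-1)| - |G[W]| + |G[U]|.
\end{align*}

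To finish I would compare this with the target. Writing $e(W,U)$ for the number of edges between $W$ and $U$, we have $|G| = |G[W]| + |G[U]| + e(W,U)$, so after cancelling $|G[W]|$ the desired bound reduces to
\begin{align*}
    |T(\Delta,\ell-1)| + \bigl(2|G[U]| + e(W,U)\bigr) \le |T(n,\ell)|.
\end{align*}
The middle term equals $\sum_{u\in U} d_{G}(u)$, which is at most $|U|\Delta = (n-\Delta)\Delta$ since $\Delta$ is the maximum degree; hence it suffices to establish the clean inequality $|T(\Delta,\ell-1)| + (n-\Delta)\Delta \le |T(n,\ell)|$. The key observation making this cost nothing is that the left-hand side is precisely the number of edges of a complete $\ell$-partite graph on $n$ vertices --- the one whose first $\ell-1$ parts are the parts of $T(\Delta,\ell-1)$ and whose last part is the set of $n-\Delta$ remaining vertices, joined completely to the rest --- and $T(n,\ell)$ maximizes the number of edges among all complete $\ell$-partite graphs on $n$ vertices.

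I expect the only genuine subtlety to be choosing the right quantity to carry through the recursion: one must induct on the \emph{sharp} inequality $f_{\ell}(G)\le |T(n,\ell)|-|G|$ rather than any version with a loose constant, because it is exactly this sharpness that lets the two error terms telescope when passing from $G[W]$ back to $G$. Once that is fixed, the neighborhood of a maximum-degree vertex automatically supplies the $K_{\ell}$-free graph on which to recurse, and the final inequality is settled with no optimization at all by reading its left-hand side as the edge count of a complete $\ell$-partite graph. The degree bound controlling $U$, together with the degenerate case of small $\Delta$ (where $T(\Delta,\ell-1)=K_{\Delta}$ and $G[W]$ is trivially $(\ell-1)$-partite), are then routine.
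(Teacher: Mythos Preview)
The paper does not prove this statement at all; Theorem~\ref{THM:Furedi} is quoted from F\"{u}redi~\cite{Furedi15} and used as a black box in the proof of Theorem~\ref{THM:Furedi-generalized-Turan-extension}. So there is no ``paper's own proof'' to compare against.

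That said, your argument is correct and is essentially F\"{u}redi's original proof. The induction on $\ell$ via the neighbourhood of a maximum-degree vertex, the identity $2|G[U]|+e(W,U)=\sum_{u\in U}d_{G}(u)\le (n-\Delta)\Delta$, and the final observation that $|T(\Delta,\ell-1)|+(n-\Delta)\Delta$ is the edge count of some complete $\ell$-partite graph on $n$ vertices (hence at most $|T(n,\ell)|$) are exactly the three steps in~\cite{Furedi15}. Your remark that one must carry the \emph{sharp} inequality $f_{\ell}(G)\le |T(n,\ell)|-|G|$ through the induction is the point F\"{u}redi himself emphasizes. The degenerate case $\Delta<\ell-1$ is indeed harmless and is absorbed by the inductive hypothesis, since then $|T(\Delta,\ell-1)|=\binom{\Delta}{2}\ge |G[W]|$ and $G[W]$ is trivially $(\ell-1)$-partite.
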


\begin{proof}[Proof of Theorem~\ref{THM:Furedi-generalized-Turan-extension}]
    Let $x \coloneqq \frac{2m}{r\binom{\ell}{r}\left(\frac{n}{\ell}\right)^{r}+(r-2)m}$. 
    Observe that $\binom{\ell}{r}\left(\frac{n}{\ell}\right)^r\frac{xr/2}{1-(r/2-1)x} = m$ and $x\le \frac{2}{r} < \frac{1}{r/2-1}$. 
    First, we prove that $|G| \ge (1-x)\binom{\ell}{2}\left(\frac{n}{\ell}\right)^{2}$. 
    Indeed, suppose to the contrary that $|G| <  (1-x)\binom{\ell}{2}\left(\frac{n}{\ell}\right)^{2}$. 
    Then it follows from Theorem~\ref{THM:Fisher-Ryan} and Lemma~\ref{FACT:Inequality-a} that 
    \begin{align*}
        N(K_r, G)
         \le \binom{\ell}{r}\left({|G|}/{\binom{\ell}{2}}\right)^{\frac{r}{2}} 
        & < \binom{\ell}{r}\left(\frac{n}{\ell}\right)^{r}\left(1- x\right)^{\frac{r}{2}} \\
        & \le \binom{\ell}{r}\left(\frac{n}{\ell}\right)^{r}\left(1- \frac{xr/2}{1-(r/2-1)x}\right) \\
        & = \binom{\ell}{r}\left(\frac{n}{\ell}\right)^r - m, 
    \end{align*}
    a contradiction. 
    Therefore, we have $|G| \ge (1-x)\binom{\ell}{2}\left(\frac{n}{\ell}\right)^{2} \ge |T(n,\ell)| - x \binom{\ell}{2}\left(\frac{n}{\ell}\right)^{2}$. 
    Hence, it follows from Theorem~\ref{THM:Furedi} that $G$ can be made $\ell$-partite by removing at most $x\binom{\ell}{2}\left(\frac{n}{\ell}\right)^{2} = \frac{2\binom{\ell}{2}\left(\frac{n}{\ell}\right)^{2}m}{r\binom{\ell}{r}\left(\frac{n}{\ell}\right)^{r}+(r-2)m}$ edges. 
\end{proof}

\section{Proofs of Theorems~\ref{THM:EKR-Turan} and~\ref{THM:EKR-Turan-Stability}}\label{SEC:Proof-EKR-Turan}
We prove Theorems~\ref{THM:EKR-Turan} and~\ref{THM:EKR-Turan-Stability} in this section.

\begin{proof}[Proofs of Theorems~\ref{THM:EKR-Turan} and~\ref{THM:EKR-Turan-Stability}]
Fix integers $r > t \ge 1$, let $\varepsilon>0$ be sufficiently small and $n$ be a sufficiently large integer. 
Let $G$ be an $n$-vertex $(K_r, t)$-intersecting graph with $N(K_r, G) \ge (1-\varepsilon)\left(\frac{n-t}{r-t}\right)^{r-t}$. 
%
Note that the associated $r$-graph $\mathcal{K}_{G}^{r}$ is $t$-intersecting and satisfies 
\begin{align}\label{equ:KG-lower-bound}
    |\mathcal{K}_{G}^{r}|
     = N(K_r, G)
     \ge (1-\varepsilon)\left(\frac{n-t}{r-t}\right)^{r-t}. 
\end{align}
Since $n$ is large, we have $|\mathcal{K}_{G}^{r}| \ge 2^{r-t}r^2 n^{r-t-1}$. 
It follows from Theorem~\ref{THM:DEF-stability} that 
there exists a $t$-set $T \subset V(G)$ such that $T$ is contained in all edges in $\mathcal{K}_{G}^{r}$. In other words, every copy of $K_{r}$ in $G$ contains $T$. 
Let $W\coloneqq V(G)\setminus T$ and $N \coloneqq \bigcap_{v\in T}N_{G}(v)$. Note that $|N| \le n-t$ and 
\begin{align}\label{equ:KG-lower-bound-2}
     N(K_r, G)
    = N\left(K_{r-t}, G[N]\right). 
\end{align}
In the sequel, we will prove that $G[N]$ is $K_{r-t+1}$-free. 

Suppose to the contrary that there exists a $(r-t+1)$-set $S\subset N$ such that $G[S] \cong K_{r-t+1}$. 
Then every copy of $K_{r-t}$ in $G[S]$ must contain at least one vertex from $S$. 
Indeed, if there exists a $(r-t)$-set $S' \subset N\setminus S$ such that $G[S'] \cong K_{r-t}$. 
Then take an arbitrary $T' \subset T$ of size $t-1$, and observe that $G[T' \cup S] \cong G[T\cup S'] \cong K_{r}$ while $(T' \cup S) \cap (T\cup S')$ has size only $t-1$, a contradiction. 
Therefore, every copy of $K_{r-t}$ in $G[S]$ must contain at least one vertex from $S$. 
Consequently, we have 
\begin{align*}
    N\left(K_{r-t}, G[N]\right)
    \le |S|\cdot |N|^{r-t-1}
    \le (r-t+1)n^{r-t-1}, 
\end{align*}
contradicting~\eqref{equ:KG-lower-bound}. 
Therefore, $G[N]$ is $K_{r-t+1}$-free. 

It follows from Theorem~\ref{THM:Erdos-Generlaized-Turan} and~\eqref{equ:KG-lower-bound-2} that 
\begin{align*}
    N(K_r, G)
    = N(K_{r-t}, G[N])
    \le N\left(K_{r-t}, T(n-t, r-t)\right), 
\end{align*}
completing the proof of Theorem~\ref{THM:EKR-Turan}. 

To prove Theorem~\ref{THM:EKR-Turan-Stability}, observe from~\eqref{equ:KG-lower-bound} and~\eqref{equ:KG-lower-bound-2} that $N(K_{r-t}, G[N]) \ge (1-\varepsilon)\left(\frac{n-t}{r-t}\right)^{r-t}$. 
It follows from Theorem~\ref{THM:Erdos-Generlaized-Turan} that $|N| \ge (1-\varepsilon)(n-t)$, since otherwise we would have 
\begin{align*}
    \left(\frac{(1-\varepsilon)(n-t)}{r-t}\right)^{r-t}
    \ge N(K_{r-t}, G[N]) 
    \ge (1-\varepsilon)\left(\frac{n-t}{r-t}\right)^{r-t},
\end{align*}
a contradiction. 
Therefore, $|G[W]\setminus G[N]| \le |W\setminus N| n \le \varepsilon (n-t)^2$. 
On the other hand, since $G[N]$ is $K_{r-t+1}$-free, it follows from Theorem~\ref{THM:Furedi-generalized-Turan-extension} that $G[N]$ is $(r-t)$-partite after removing $\varepsilon (r-t-1)\left(\frac{n-t}{r-t}\right)^{2}$ edges. 
In total, we can remove at most $\varepsilon (n-t)^2 + \varepsilon (r-t-1)\left(\frac{n-t}{r-t}\right)^{2} \le 2\varepsilon (n-t)^2$ edges to make $G[W]$ become $\ell$-partite. 
\end{proof}

\section{Proof of Theorem~\ref{THM:DEF-Turan}}\label{SEC:DEF-Turan}
We establish Theorem~\ref{THM:DEF-Turan} in this section, employing induction on both $r$ and the size of the set $L$.
The base case is presented in the following theorem, the proof of which is an interesting combination of the $\Delta$-system method and Tur\'{a}n's theorem.
\begin{theorem}\label{THM:Deza-Turan-L-two}
    Let $r \ge 3$, $0 \le \ell_1 < \ell_2 \le r-1$, and $n \ge 5r^4$ be integers. 
    Suppose that $G$ is an $n$-vertex $(K_r, \{\ell_1, \ell_2\})$-intersecting graph. 
    Then 
    \begin{align*}
        N(K_r, G)
        \le \left(1-\frac{\ell_2-\ell_1}{2(r-\ell_1)}\right) \frac{(n-\ell_1)^2}{(r-\ell_1)(r-\ell_2)}
        \le \left(1-\frac{1}{3r}\right) \frac{(n-\ell_1)(n-\ell_2)}{(r-\ell_1)(r-\ell_2)}. 
    \end{align*}
\end{theorem}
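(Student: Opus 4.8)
The plan is to analyze the associated $r$-graph $\mathcal{H} \coloneqq \mathcal{K}_G^r$, which is $\{\ell_1,\ell_2\}$-intersecting, and to exploit the structure forced by the $\Delta$-system (sunflower) method. First I would dispose of the easy case: if $|\mathcal{H}| < r^2 - r + 1$ (or some similarly small threshold), the bound is trivial for $n \ge 5r^4$. So assume $|\mathcal{H}|$ is large. Since $\mathcal{H}$ is $\{\ell_1,\ell_2\}$-intersecting with $|L| = 2$, Theorem~\ref{THM:DEF-stability} (with $s=2$) gives, once $|\mathcal{H}| \ge 4r^2 n$, that $\left|\bigcap_{e\in\mathcal{H}} e\right| \ge \ell_1$; in fact I expect the intersection to be exactly an $\ell_1$-set $C$ (if it were larger, all pairwise intersections would exceed $\ell_1$, so $\mathcal{H}$ would be $\{\ell_2\}$-intersecting, handled separately). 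Restricting to the "trace" $\mathcal{H}' \coloneqq \{e \setminus C : e \in \mathcal{H}\}$ on vertex set $V(G)\setminus C$, we obtain an $(r-\ell_1)$-graph in which every pair of edges meets in exactly $\ell_2 - \ell_1$ vertices — i.e. $\mathcal{H}'$ is $\{\ell_2-\ell_1\}$-intersecting. By Deza's Theorem~\ref{THM:Deza74}, once $|\mathcal{H}'| \ge r^2 - r + 1$, $\mathcal{H}'$ is an $(\ell_2-\ell_1)$-sunflower: there is a common core $D$ of size $\ell_2 - \ell_1$, and the sets $\{e \setminus (C\cup D) : e\in\mathcal{H}\}$ are pairwise disjoint $(r-\ell_2)$-sets.

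The key structural consequence is then a Turán-type constraint. Let $p \coloneqq \ell_1$, $q \coloneqq \ell_2$, and write $A \coloneqq C \cup D$, an $\ell_2$-set, with $W \coloneqq V(G)\setminus A$ partitioned (on the part covered by the petals) into disjoint $(r-\ell_2)$-sets $P_1, \ldots, P_N$ where $N = |\mathcal{H}|$. Each $K_r$ in $G$ is $A \cup P_i$. Counting: $N(K_r,G) = N$, and the petals are disjoint subsets of $W$, so trivially $N \le (n-\ell_2)/(r-\ell_2)$ — but that is far too strong and false in general, so the actual configuration must be subtler: the core $C$ of $\mathcal{H}$ has size $\ell_1$ but the sunflower core $D$ lives in the trace, so different edges $e$ can share the $D$-part only up to the $\{\ell_1,\ell_2\}$ constraint. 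I would instead set this up as: fix the $\ell_1$-core $C$; each edge is $C \cup F$ with $F$ an $(r-\ell_1)$-set in $W \coloneqq V(G)\setminus C$, and the family $\{F\}$ is $\{\ell_2 - \ell_1\}$-intersecting, hence by Deza a sunflower with core $D$ of size $\ell_2-\ell_1$. So in fact all edges do share $C\cup D$, the petals $P_i = F_i \setminus D$ partition a subset of $W \setminus D$ into $(r-\ell_2)$-sets, giving $N \le \lfloor (n - \ell_2)/(r-\ell_2)\rfloor$. This contradicts largeness unless the sunflower conclusion fails, i.e. $|\mathcal{H}'| < r^2 - r + 1$; so in the large regime $|\mathcal{H}| \le r^2 - r$, again trivial. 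Hence the only surviving regime is the intermediate one $r^2 - r + 1 \le |\mathcal{H}| < 4r^2 n$, but that is already below $\left(1-\frac{1}{3r}\right)\frac{(n-\ell_1)(n-\ell_2)}{(r-\ell_1)(r-\ell_2)} \sim \frac{n^2}{r^2}$ for large $n$. This suggests the theorem as stated needs the extra factor precisely to handle a non-degenerate regime I am missing — so I would re-examine: the $(K_r,L)$-intersecting hypothesis does not force $\mathcal{K}_G^r$ to be large; the point is to bound $N(K_r,G)$ by $(1-\frac{1}{3r})\prod\frac{n-\ell}{r-\ell}$, and the hard case is when $\mathcal{K}_G^r$ is \emph{not} a sunflower.

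Thus the real argument: suppose $|\mathcal{H}| = N(K_r,G)$ is close to $\frac{(n-\ell_1)(n-\ell_2)}{(r-\ell_1)(r-\ell_2)}$. By DEF stability the common intersection has size $\ge \ell_1$; take $C$ a fixed $\ell_1$-subset contained in every edge, pass to the link, reducing to an $(r-\ell_1)$-graph $\mathcal{H}'$ on $n - \ell_1$ vertices that is $\{\ell_2-\ell_1\}$-intersecting and has $\ge r^2-r+1$ edges, so by Deza it is an $(\ell_2 - \ell_1)$-sunflower with core $D$ — forcing $N \le (n-\ell_2)/(r-\ell_2)$, far smaller than $\frac{n^2}{r^2}$. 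Contradiction. So actually $|\mathcal{H}| < r^2-r+1$ in that branch and we are done. The genuinely delicate point — and the main obstacle — is the \emph{boundary}: after applying DEF-stability we may fail to have $\ell_1$ in the common core if $|\mathcal{H}|$ is only moderately large, so I must run a careful case split on $|\mathcal{H}|$ against the two thresholds $r^2-r+1$ and $2^{s-1}r^2 n^{s-1} = 4r^2 n$, show that each edge-count regime either yields the sunflower structure (hence a linear-in-$n$ bound, trivially within target) or is itself below $\left(1-\frac{1}{3r}\right)\frac{(n-\ell_1)(n-\ell_2)}{(r-\ell_1)(r-\ell_2)}$, using $n \ge 5r^4$ to make $4r^2 n \le \frac{1}{2}\cdot\frac{n^2}{r^2}$. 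Finally, the second inequality in the statement, $\bigl(1-\frac{\ell_2-\ell_1}{2(r-\ell_1)}\bigr)\frac{(n-\ell_1)^2}{(r-\ell_1)(r-\ell_2)} \le \bigl(1-\frac{1}{3r}\bigr)\frac{(n-\ell_1)(n-\ell_2)}{(r-\ell_1)(r-\ell_2)}$, is a routine calculation: it reduces to $\frac{\ell_2-\ell_1}{2(r-\ell_1)} \ge \frac{1}{3r} + \bigl(1-\tfrac{1}{3r}\bigr)\frac{\ell_2-\ell_1}{n-\ell_1}$, which holds for $n$ large since the left side is at least $\frac{1}{2(r-\ell_1)} \ge \frac{1}{2r}$ while $\frac{1}{3r}$ plus a vanishing term is smaller.
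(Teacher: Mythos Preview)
Your proof has a genuine gap at the central reduction step. After applying Theorem~\ref{THM:DEF-stability} to find an $\ell_1$-set $C$ contained in every edge of $\mathcal{H}=\mathcal{K}_G^r$, you claim that the trace family $\mathcal{H}'=\{e\setminus C : e\in\mathcal{H}\}$ is $\{\ell_2-\ell_1\}$-intersecting. This is false: two edges $e,e'\in\mathcal{H}$ may have $|e\cap e'|=\ell_1$, in which case $e\cap e'=C$ and hence $(e\setminus C)\cap(e'\setminus C)=\emptyset$. So $\mathcal{H}'$ is only $\{0,\ell_2-\ell_1\}$-intersecting, and Deza's Theorem~\ref{THM:Deza74} does not apply. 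Such families are \emph{not} sunflowers in general and can genuinely have $\Theta(n^2)$ edges --- for instance, take $G=T(n,r)$ when $\ell_1=0$, $\ell_2=r-1$: then $\mathcal{K}_G^r$ is $\{0,r-1\}$-intersecting with $\sim(n/r)^r$ edges for $r=3$, or more relevantly, graphs like $S(n,r,\ell_1)$ show that the quadratic regime is real. Your case analysis therefore collapses: the ``intermediate'' regime $4r^2 n \le |\mathcal{H}| \le (1-\tfrac{1}{3r})\frac{(n-\ell_1)(n-\ell_2)}{(r-\ell_1)(r-\ell_2)}$ is not empty, and nothing in your argument rules it out.

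The paper's proof confronts exactly this difficulty. After the reduction (its Case~2), the real work is Case~1 with $\ell_1=0$, i.e.\ bounding a $\{0,\ell\}$-intersecting $r$-graph coming from a graph $G$. The key new idea is to look at all $\ell$-sets $C$ that serve as cores of large sunflowers in $\mathcal{K}_G^r$, show these cores are pairwise disjoint, and build an auxiliary graph $H$ on the set of cores where $CC'$ is an edge when some $K_r$ of $G$ contains both. The crucial observation --- which has no counterpart in your proposal --- is that $H$ is $K_{\lceil 2r/\ell\rceil}$-free: if $\lceil 2r/\ell\rceil$ cores were pairwise $H$-adjacent, their union would induce a clique of size $\ge 2r$ in $G$, which contains two copies of $K_r$ intersecting in neither $0$ nor $\ell$ vertices. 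Tur\'an's theorem on $H$ then bounds $\sum_C d_H(C)$, which in turn bounds the sizes of the sunflowers and hence $|\mathcal{K}_G^r|$. This interplay between the $\Delta$-system structure and a Tur\'an bound on the auxiliary graph is the heart of the argument, and it is what your proposal is missing.
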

\begin{proof}[Proof of Theorem~\ref{THM:Deza-Turan-L-two}]
     Fix integers $r \ge 3$ and $0 \le \ell_1 < \ell_2 \le r-1$. 
     Suppose to the contrary that there exists a $(K_r, \{\ell_1, \ell_2\})$-intersecting graph $G$ on $n \ge 5r^4$ vertices with $N(K_r, G) \ge \left(1-\frac{\ell_2-\ell_1}{2(r-\ell_1)}\right) \frac{(n-\ell_1)^2}{(r-\ell_1)(r-\ell_2)}$. 
     Note that the associated $r$-graph $\mathcal{K}_{G}^{r}$ is $\{\ell_1, \ell_2\}$-intersecting with at least $\left(1-\frac{\ell_2-\ell_1}{2(r-\ell_1)}\right) \frac{(n-\ell_1)^2}{(r-\ell_1)(r-\ell_2)}$ edges. 

     \textbf{Case 1:} $\ell_1 = 0$. 

     Let $\ell \coloneqq \ell_2$, and let
     \begin{align*}
        V \coloneqq V(G), \quad 
         U 
         \coloneqq \left\{u\in V \colon d_{\mathcal{K}_{G}^{r}}(u) \ge r^2\right\}, \quad
         \overline{U} \coloneqq V\setminus U. 
     \end{align*}
     For every $C\subset V$ of size $\ell$, fix a maximum sunflower $\mathcal{S}_{C} \subset \mathcal{H}$ with $C$ as its core. 
     Let 
     \begin{align*}
         \mathcal{C}
         \coloneqq \left\{C\in \binom{V}{\ell} \colon |\mathcal{S}_{C}| \ge r^2\right\}. 
     \end{align*}
    \begin{claim}\label{CLAIM:U-is-union-C}
        We have $U = \bigcup_{C\in \mathcal{C}}C$. 
        In particular, $|U| = \ell |\mathcal{C}|$. 
    \end{claim}
    \begin{proof}
        The inclusion $\bigcup_{C\in \mathcal{C}}C \subset U$ follows easily from the definitions, so it suffices to show the other direction. 
        Fix $u\in U$. 
        Observe that the link $L_{\mathcal{H}}(u)$ is an $\{\ell-1\}$-intersecting  $(r-1)$-graph. 
        So it follows from the definition of $U$ and Theorem~\ref{THM:Deza74} that $L_{\mathcal{H}}(u)$ is an $(\ell-1)$-sunflower of size at least $r^2$. Therefore, $u$ is contained in the core of an $\ell$-sunflower of size at least $r^2$. This implies that $U \subset \bigcup_{C\in \mathcal{C}}C$. 
    \end{proof}

    \begin{claim}\label{CLAIM:cores-are-disjoint}
        We have $C\cap C' = \emptyset$ for all distinct $C, C' \in \mathcal{C}$. 
    \end{claim}
    \begin{proof}[Proof of Claim~\ref{CLAIM:cores-are-disjoint}]
        Suppose to the contrary that there exist distinct $C, C' \in \mathcal{C}$ with $C\cap C' \neq \emptyset$ (which can only occur if $\ell>1$). 
        Since $\mathcal{S}_{C}$ is a sunflower of size at least $r^2$, there exists $E\in \mathcal{S}_{C}$ such that $E\setminus C$ is disjoint from $C'$, that is, $E\cap C' = C\cap C'$. 
        Similarly, since $\mathcal{S}_{C'}$ is a sunflower of size at least $r^2$, there exists $E'\in \mathcal{S}_{C'}$ such that $E'\cap E = C\cap C'$. 
        However, this implies that $1\le |E\cap E'| \le \ell-1$, a contradiction. 
    \end{proof}

    \begin{claim}\label{CLAIM:cores-edges}
        For every $E\in \mathcal{K}_{G}^{r}$ and every $C\in \mathcal{C}$, we have either $E \in \mathcal{S}_{C}$ or $E \cap C = \emptyset$. 
    \end{claim}
    \begin{proof}
        It suffices to show that either $E \cap C = \emptyset$ or $C \subset E$, as the latter case implies that $E \in \mathcal{S}_{C}$ (since $\mathcal{K}_{G}^{r}$ is $\{0,\ell\}$-intersecting). 
        Suppose to the contrary that there exist $E\in \mathcal{K}_{G}^{r}$ and $C\in \mathcal{C}$ such that $1\le |C\cap E| \le \ell-1$ (which can only occur if $\ell>1$). 
        Similar to the proof of Claim~\ref{CLAIM:cores-are-disjoint}, since $\mathcal{S}_{C}$ is a sunflower of size at least $r^2$, there exists $E'\in \mathcal{S}_{C}$ such that $E'\cap E = C'\cap E$. 
        This implies that $1\le |E\cap E'| \le \ell-1$, a contradiction.
    \end{proof}

    Define an auxiliary graph $H$ with vertex set $\mathcal{C}$. Two distinct members $C, C' \in \mathcal{C}$ are adjacent in $H$ if there exists $E \in \mathcal{S}_{C}$ such that $C' \subset E$, or there exists $E' \in \mathcal{S}_{C'}$ such that $C \subset E'$.

    \begin{claim}\label{CLAIM:aux-graph-clique-number}
        The graph $H$ is $K_{\lceil 2r/\ell \rceil}$-free.
        In particular, 
        \begin{align*}
            |H|
            \le \left(1 - \frac{1}{\lceil 2r/\ell \rceil - 1}\right) \frac{|\mathcal{C}|^2}{2}
            \le \left(1 - \frac{\ell}{2r}\right) \frac{|\mathcal{C}|^2} {2}. 
        \end{align*}
    \end{claim}
    \begin{proof}
        Suppose to the contrary that $\{C_{1}, \ldots, C_{\lceil 2r/\ell \rceil}\} \subset \mathcal{C}$ induces a copy of $K_{\lceil 2r/\ell \rceil}$ in $H$. 
        Let $S \coloneqq C_{1}\cup \cdots \cup C_{\lceil 2r/\ell \rceil}$. 
        It follows from the definitions of $\mathcal{K}_{G}^{r}$ and $H$ that the induced subgraph $G[S]$ is complete and contains $\ell \cdot \lceil 2r/\ell \rceil \ge 2r$ vertices. 
        It is easy to see that $G[S]$ contains two copies of $K_{r}$ whose intersection is not $0$ or $\ell$ (here, we used the assumption that $r \ge 3$), a contradiction. 
    \end{proof}

    \begin{claim}\label{CLAIM:degree-u-sunflower}
        For every $C\in \mathcal{C}$ and $u\in C$ we have $d_{\mathcal{K}_{G}^{r}}(u) \le \frac{\ell \cdot d_{H}(C) + |\overline{U}|}{r-\ell}$. 
    \end{claim}
    \begin{proof}
        Fix $C\in \mathcal{C}$ and $u\in C$. 
        It follows from Claim~\ref{CLAIM:cores-edges} that $d_{\mathcal{K}_{G}^{r}}(u) = |\mathcal{S}_{C}|$. 
        The definition of $H$ and Claim~\ref{CLAIM:cores-edges} imply that $E\setminus C \subset \bigcup_{C'\in N_{H}(C)}C' \cup \overline{U}$ for every $E\in \mathcal{S}_{C}$. 
        Since $\mathcal{S}_{C}$ is a sunflower, we have $|\mathcal{S}_{C}| \le \frac{|\bigcup_{C'\in N_{H}(C)}C' \cup \overline{U}|}{r-\ell} = \frac{\ell \cdot d_{H}(C) + |\overline{U}|}{r-\ell}$. 
    \end{proof}

    By Claims~\ref{CLAIM:U-is-union-C},~\ref{CLAIM:aux-graph-clique-number}, and~\ref{CLAIM:degree-u-sunflower}, we have 
    \begin{align*}
        \sum_{u\in U}d_{\mathcal{K}_{G}^{r}}(u)
        = \sum_{C\in \mathcal{C}}\sum_{u\in C}d_{\mathcal{K}_{G}^{r}}(u) 
        & \le \sum_{C\in \mathcal{C}} \ell\cdot \frac{\ell \cdot d_{H}(C) + |\overline{U}|}{r-\ell} \\
        & = \frac{1}{r-\ell}\left(2|H| \cdot \ell^2 + \ell |\mathcal{C}| |\overline{U}|\right) \\
        & \le \frac{1}{r-\ell}\left(2 \left(1 - \frac{\ell}{2r}\right) \frac{|\mathcal{C}|^2}{2} \cdot \ell^2  + \ell |\mathcal{C}| |\overline{U}|\right) \\
        & = \frac{1}{r-\ell}\left(\left(1 - \frac{\ell}{2r}\right) |U|^2  + |U| |\overline{U}|\right). 
    \end{align*}
    Therefore, 
    \begin{align*}
        r\cdot |\mathcal{K}_{G}^{r}|
        = \sum_{u\in V}d_{\mathcal{K}_{G}^{r}}(u)
        & = \sum_{u\in U}d_{\mathcal{K}_{G}^{r}}(u) + \sum_{u\in \overline{U}}d_{\mathcal{K}_{G}^{r}}(u) \\
        & \le \frac{1}{r-\ell}\left(\left(1 - \frac{\ell}{2r}\right) |U|^2  + |U| |\overline{U}|\right)  + r^2 |\overline{U}|  \\
        & \le \frac{1}{r-\ell}\left(\left(1 - \frac{\ell}{2r}\right) |U|^2  + |U| |\overline{U}| + r^3 |\overline{U}|\right) 
         \le \left(1 - \frac{\ell}{2r}\right) \frac{n^2}{r-\ell},  
    \end{align*}
    where the last inequality follows from a simple analysis of the quadratic form $\left(1 - \frac{\ell}{2r}\right) |U|^2  + |U| |\overline{U}| + r^3 |\overline{U}|$ (by considering $|U|$ as the variable) and the assumption that $n \ge 5r^4$. 
    Therefore, 
    \begin{align}\label{equ:Deza-two-a}
        N(K_r, G) 
        = |\mathcal{K}_{G}^{r}| 
        \le \left(1 - \frac{\ell}{2r}\right) \frac{n^2}{r(r-\ell)}
        \le  \left(1 - \frac{1}{3r}\right) \frac{n(n-\ell)}{r(r-\ell)}
    \end{align}
    where the last inequality follows from the assumption that $n \ge 5r^4$. 

    \medskip 

    \textbf{Case 2:} $\ell_1 \ge 1$.

    Let $\ell \coloneqq \ell_2 - \ell_1 \ge 1$. 
    Since 
    \begin{align*}
        |\mathcal{K}_{G}^{r}|
        \ge \left(1-\frac{\ell_2-\ell_1}{2(r-\ell_1)}\right) \frac{(n-\ell_1)^2}{(r-\ell_1)(r-\ell_2)}
        \ge \frac{1}{2} \frac{(n-r)^2}{r^2}
        > 2r^2 n, 
    \end{align*}
    it follows from Theorem~\ref{THM:DEF-stability} that there exists an $\ell_1$-set $T \subset V$ such that $T$ is contained in every edge in $\mathcal{K}_{G}^{r}$. In other words, every copy of $K_r$ in $G$ contains $T$. 
    Let $N \coloneqq \bigcap_{v \in T}N_{G}(v) \subset V\setminus T$. 
    It is easy to see that $G[N]$ is $(K_{r-\ell_1}, \{0,\ell\})$-intersecting. 
    Therefore, by~\eqref{equ:Deza-two-a}, we have 
    \begin{align*}
        N(K_r, G)
        = N(K_{r-t}, G[N])
        & \le \left(1 - \frac{\ell}{2(r-\ell_1)}\right) \frac{(n-\ell_1)^2}{(r-\ell_1)(r-\ell_1-\ell)} \\
        & = \left(1 - \frac{\ell_2-\ell_1}{2(r-\ell_1)}\right) \frac{(n-\ell_1)^2}{(r-\ell_1)(r-\ell_2)} \\
        & \le \left(1 - \frac{1}{3r}\right) \frac{(n-\ell_1)(n-\ell_2)}{(r-\ell_1)(r-\ell_2)}, 
    \end{align*}
    completing the proof of Theorem~\ref{THM:Deza-Turan-L-two}. 
\end{proof}

Now we are ready to prove Theorem~\ref{THM:DEF-Turan}. 
\begin{proof}[Proof of Theorem~\ref{THM:DEF-Turan}]
    We prove by induction on $r$ and $|L|$. 
    The base case $|L| = 2$ and $r \ge 3$ was proved in Theorem~\ref{THM:Deza-Turan-L-two}, so we may assume that $L = \{\ell_1, \cdots, \ell_{s}\} \subset [0, r-1]$ satisfies $s\ge 3$ and $\ell_1 < \cdots < \ell_{s}$. 
    Let $G$ be a $(K_r, L)$-intersecting graph on $n \ge (2r)^{r+1}$ vertices. 
    Let $V \coloneqq V(G)$. 

    \textbf{Case 1:} $0 \in L$. 
    
    Let $L' \coloneqq \{\ell_2-1, \ldots, \ell_{s}-1\}$. 
    Fix $u \in V$ and let $N_{u} \coloneqq N_{G}(u)$. 
    Notice that the induced subgraph $G[N_{u}]$ is $(K_{r-1}, L')$-intersecting. 
    It follows from the inductive hypothesis that 
    \begin{align*}
        d_{\mathcal{K}_{G}^{r}}(u) 
        = N(K_{r-1}, G[N_{u}])
         & \le \left(1-\frac{1}{3(r-1)}\right) \prod_{i=2}^{s}\frac{(n-1)-(\ell_{i}-1)}{(r-1) - (\ell_{i}-1)}  \\
         & \le \left(1-\frac{1}{3r}\right) \prod_{i=2}^{s}\frac{n-\ell_{i}}{r-\ell_i}. 
    \end{align*}
    Consequently, 
    \begin{align*}
        N(K_r, G)
        = |\mathcal{K}_{G}^{r}|
        = \frac{1}{r} \sum_{u\in V}d_{\mathcal{K}_{G}^{r}}(u) 
        \le \frac{n}{r} \cdot \left(1-\frac{1}{3r}\right) \prod_{i=2}^{s}\frac{n-\ell_{i}}{r-\ell_i}
        = \left(1-\frac{1}{3r}\right) \prod_{i=1}^{s}\frac{n-\ell_{i}}{r-\ell_i}. 
    \end{align*}

    \medskip 

    \textbf{Case 2:} $0 \not\in L$. 

    We may assume that $|\mathcal{K}_{G}^{r}| \ge 2^{s-1}r^2 n^{s-1}$ since otherwise it follows from $n \ge (2r)^{r+1}$ that 
    \begin{align*}
        N(K_r, G)
        =|\mathcal{K}_{G}^{r}| 
        < 2^{s-1}r^2 n^{s-1} 
        \le \left(1-\frac{1}{3r}\right)\prod_{i=1}^{s}\frac{n-\ell_{i}}{r-\ell_i}.
    \end{align*}
    By Theorem~\ref{THM:DEF-stability}, there exists an $\ell_1$-set $T \subset V$ such that $T \subset E$ for all $E \in \mathcal{K}_{G}^{r}$. 
    Let $N \coloneqq \bigcap_{v\in T}N_{G}(v) \subset V\setminus T$. 
    Let $L' \coloneqq \{0, \ell_{2}-\ell_{1}, \ldots, \ell_{s}-\ell_{1}\}$. 
    It is easy to see that the induced subgraph $G[N]$ is $(K_{r-\ell_1}, L')$-intersecting. 
    It follows from the inductive hypothesis that 
    \begin{align*}
        N(K_r, G)
        = N(K_{r-\ell_1}, G[N])
        & \le \left(1-\frac{1}{3(r-\ell_1)}\right) \prod_{i=1}^{s}\frac{(n-\ell_1)-(\ell_{i}-\ell_1)}{(r-\ell_1)-(\ell_i-\ell_1)} \\
        & \le \left(1-\frac{1}{3r}\right) \prod_{i=1}^{s}\frac{n-\ell_{i}}{r-\ell_i}, 
    \end{align*}
    completing the proof of Theorem~\ref{THM:DEF-Turan}. 
\end{proof}

\section{Concluding remarks}
There are several natural questions left open regarding Theorems~\ref{THM:EKR-Turan} and~\ref{THM:DEF-Turan}.
For example, determining the optimal value of $n_0(r,t)$ in Theorem~\ref{THM:EKR-Turan} and improving the factor $1-\frac{1}{3r}$ in Theorem~\ref{THM:DEF-Turan} are interesting directions for further investigation.
It seems that our proof for Theorem~\ref{THM:EKR-Turan} yields an exponential bound (in terms of $r$) for $n_0(r,t)$.

Recall that $\Psi_{r}(n,L)$ is the maximum number of $K_r$ in an $n$-vertex $(K_{r}, L)$-intersecting graph. 
Let $\Phi_{r}(n,L)$ denote the maximum size of an $n$-vertex $L$-intersecting $r$-graph. 
It is clear that $\Psi_{r}(n,L) \le \Phi_{r}(n,L)$ for all $n \ge r \ge 2$ and $L \subset [0, r-1]$. 
The following general question seems interesting. 

\begin{problem}\label{PROB:L-intersecting-vs-generalized-Turan-lower-bound}
    Characterize the family of sets $L\subset [0,r-1]$ such that 
    \begin{align*}
        \lim_{n\to \infty} \frac{\Psi_{r}(n,L)}{\Phi_{r}(n,L)} = 0. 
    \end{align*}
\end{problem}
\textbf{Remark.}
When $L = [0,s-1]$ for some $s\in [2, r-1]$, 
the celebrated work of R\"{o}dl~\cite{Rod85} show that $\Phi(n,r,L) = (1+o(1))\binom{n}{r}/\binom{r}{s} = \Theta(n^{s})$, while a simple aurgument using the Graph Removal Lemma (see~\cite{GJ20}) shows that $\Psi(n,r,L) = o(n^{s})$. 

Another intriguing question, which seeks an analogue of the celebrated Frankl--Wilson Theorem~\cite{FW81} in the generalized Tur\'{a}n setting, is as follows. 

\begin{problem}\label{PROB:FW-graph}
    Let $n > r \ge s \ge 1$ be integers and  $p$ be a prime number. 
    Let $L \subset [0,p-1]$ be a set of $s$ integers. 
    What is the maximum value of $N(K_r, G)$ if $G$ is an $n$-vertex graph satisfying 
    \begin{enumerate}[label=(\roman*)]
        \item $k\not\in L \Mod{p}$, and 
        \item $|e\cap e'| \in L \Mod{p}$ for all distinct edges $e, e' \in \mathcal{K}_{G}^r$. 
    \end{enumerate}
\end{problem}

There are numerous classical theorems from Extremal Set Theory that one could consider extending to the generalized Tur\'{a}n setting. 
Here we list a few instances and refer the reader to surveys~\cite{FT16,MV16} for more potential extensions.

\begin{problem}\label{PROB:general-extension}
    Given an $n$-vertex graph $G$, what is the maximum value of $N(K_r, G)$ if
    \begin{enumerate}[label=(\roman*)]
        \item\label{PROB:general-extension-1} $\mathcal{K}_{G}^{r}$ contains at most $t$ pairwise vertex-disjoint edges for $2 \le t \le n/r$? 
        \item\label{PROB:general-extension-2} $\mathcal{K}_{G}^{r}$ does not contain a certain $r$-uniform tree $T$?
        \item\label{PROB:general-extension-3} $\mathcal{K}_{G}^{r}$ does not contain the cycle $C_{\ell}^{t}$? Here $C_{\ell}^{t}$ denote the $r$-uniform length-$\ell$ cycle on $[(r-t)\ell]$ with edge set 
        \begin{align*}
            \left\{\{j(r-t)+1, \cdots, (j+1)r-jt\} \Mod{(r-t)\ell} \colon 0 \le j \le \ell-1\right\}. 
        \end{align*}
    \end{enumerate}
\end{problem}
Problem~\ref{PROB:general-extension}~\ref{PROB:general-extension-1} corresponds to the famous Erd\H{o}s Matching Problem~\cite{Erd65} and was addressed in~\cite{ZCGZ23} for fixed $t$ and large $n$. 
Problem~\ref{PROB:general-extension}~\ref{PROB:general-extension-2} corresponds to the Tree problem in $r$-graphs (see e.g.~\cite{BK14,F14tree,FJS14path,FJ15,KMV17b,FJKMV19a}) and was studied in~\cite{LGHSTVZ22,LS23a,LS23b} for some special cases. 
Problem~\ref{PROB:general-extension}~\ref{PROB:general-extension-3} corresponds to Cycle problems in $r$-graphs (see e.g.~\cite{KMV15a,FJ15cycle}) and was studied in~\cite{LGHSTVZ22,LS23a,LS23b} for the case $t=1$. 
\bibliographystyle{alpha}
\newcommand{\etalchar}[1]{$^{#1}$}

\end{document}